\newtheorem{thm}{Theorem}
\newtheorem{lem}[thm]{Lemma}
\newtheorem{definition}[thm]{Definition}
\newtheorem{example}[thm]{Example}
\newtheorem{cor}[thm]{Corollary}
\newtheorem{rmk}[thm]{Remark}
\newcommand{\NN}{\mathbb{N}}
\newcommand{\ZZ}{\mathbb{Z}}
\newcommand{\QQ}{\mathbb{Q}}
\newcommand{\RR}{\mathbb{R}}
\newcommand{\CC}{\mathbb{C}}
\newcommand{\cO}{\mathcal{O}}
\title{Imaginary Multiquadratic Fields of Class Number 1}
\author{Amy Feaver}
\address{Amy Feaver, Department of Mathematics and Computing Science, The King's University, 9125 50 St NW, Edmonton, AB T6B 2H3}
\email{amy.feaver@kingsu.ca}
\keywords{class number, multiquadratic number field}
\subjclass[2010]{Primary 11R04, 11R29}
\begin{document}

\begin{abstract}
In this paper, we present a complete classification of all imaginary $n$-quadratic fields with class number 1.
\end{abstract}

\maketitle

\section{Introduction}

An \textit{$n$-quadratic number field}, $n\geq0$, is any field $K$ of degree $2^n$ over $\QQ$ which is formed by adjoining the square root of $m$ rational integers to $\QQ$ for some $m\in\NN$. That is, $K=\QQ(\sqrt{a_1},...,\sqrt{a_m})$ for $a_1,...,a_m\in\ZZ$. If $n\geq2$, the field is also called a \textit{multiquadratic} number field.  A multiquadratic field is said to be \textit{imaginary} if any of the radicands $a_1,...,a_m$ are negative.

The purpose of this paper is to provide a complete list of the imaginary $n$-quadratic fields, $n\geq1$, of class number 1.  We first look at the development and previous results for this problem in the cases of $n=1$ and $n=2$. We then go on to prove results for all imaginary $n$-quadratic fields with $n\geq3$. 

One of the first known discussions of class numbers of number fields can be found in section V of Gauss's book \textit{Disquisitiones Arithmeticae} ~\cite{Gau} which was published in 1801. Using the language of quadratic forms, Gauss made several significant conjectures and proved some results about the class numbers of quadratic fields. 

Gauss showed that the class number of $\QQ(\sqrt{a})$, denoted $h(a)$, is equal to 1 for $a\in\{-1,-2,-3,-7,-11,-19,-43,-67,-163\}$. He conjectured that his list was complete. This conjecture was proven to be true by Stark in 1967 ~\cite{Sta67}. Gauss also conjectured that $h(a)\to\infty$ as $a\to-\infty$, and this was proven by Heilbronn in 1934 ~\cite{Hei34}. Heilbronn used an unusual technique: he first showing the conjecture holds if the generalized Riemann hypothesis (GRH) is true, and then showed it holds assuming the GRH is false.

Gauss also conjectured that there are infinitely many real quadratic fields of class number 1. This is still an open problem, though many mathematicians have published results which support this conjecture. Some of the most famous work on this problem was done by Cohen and Lenstra in 1983. These results are a series of more specific conjectures on the class numbers of quadratic fields, called the Cohen-Lenstra Heuristics, and are well supported by numerical data. In particular, they conjecture that real quadratic fields with prime discriminant have class number 1 close to 75.446\% of the time. For a more in-depth discussion see chapter 5, section 10 of Cohen's book ~\cite{Coh}.

While the class numbers of real quadratic fields still remain fairly elusive, we do have the tools to study the class numbers of imaginary multiquadratic fields more easily. In 1974, Brown and Parry determined a complete list of imaginary biquadratic number fields with class number 1~\cite{BP74}. There are exactly 47 such fields. These are the fields $\QQ\left(\sqrt{a_1},\sqrt{a_2}\right)$ with the sets $\{a_1,a_2\}$ as follows

\vspace{1em} 

\begin{center}

\begin{tabular}{lllll}

\{-1,2\}  & \{2,-3\}   & \{-3,5\}    & \{-7,5\}    & \{-11,17\}   \\
\{-1,3\}  & \{2,-11\}  & \{-3,-7\}   & \{-7,-11\}  & \{-11,-19\}  \\
\{-1,5\}  & \{-2,-3\}  & \{-3,-11\}  & \{-7,13\}   & \{-11,-67\}  \\
\{-1,7\}  & \{-2,5\}   & \{-3,17\}   & \{-7,-19\}  & \{-11,-163\} \\
\{-1,11\} & \{-2,-7\}  & \{-3,-19\}  & \{-7,-43\}  & \{-19,-67\}  \\
\{-1,13\} & \{-2,-11\} & \{-3,41\}   & \{-7,61\}   & \{-19,-163\} \\
\{-1,19\} & \{-2,-19\} & \{-3,-43\}  & \{-7,-163\} & \{-43,-67\}  \\
\{-1,37\} & \{-2,29\}  & \{-3,-67\}  &           & \{-43,-163\} \\
\{-1,43\} & \{-2,-43\} & \{-3,89\}   &           & \{-67,-163\} \\
\{-1,67\} & \{-2,-67\} & \{-3,-163\} &           &            \\
\{-1,163\}&          &  				 &           &            \\

\end{tabular}

\end{center}

\vspace{1em} 

Brown and Parry determined this list using techniques of Herglotz~\cite{Her22} who gives a formula that relates the class number of a biquadratic number field to that of its quadratic subfields, along with some other parameters. This should not come as a surprise to the keen observer who has surely already noticed that many of the numbers in the table above are also radicands of quadratic fields with class number 1. This relationship between the class number of a field and its quadratic subfields is what drives the discussion in the next section.

The main result of this paper is stated and proven in Section \ref{main_theorem_section}. Here we prove that there are no $n$-quadratic fields with $n\geq4$ that have class number 1. In the case of $n\geq5$, this result follows from a theorem of Fr\"{o}lich, stated as Theorem \ref{ramPrimes}. Much of the machinery for the classification in the cases of $n=3$ and $n=4$ is based on Kuroda's class number formula, which was developed by Lemmermeyer in 1994 ~\cite{Lem94}. The applications of this formula that we need for this classification are found in section \ref{Kuroda_section}.

The complete list of fields of class number 1 in the $n=3$ case. These are exactly the fields $\QQ\left(\sqrt{a_1},\sqrt{a_2},\sqrt{a_3}\right)$ with the sets $\{a_1,a_2,a_3\}$ as follows:

\vspace{1em}

\begin{center}
\begin{tabular}{lllll}
$\{-1,2,3\}$  & $\{-1,3,5\}$  & $\{-1,7,5\}$  & $\{-2,-3,-7\}$  & $\{-3,-7,5\}$  \\
$\{-1,2,5\}$  & $\{-1,3,7\}$  & $\{-1,7,13\}$ & $\{-2,-3,5\}$ & $\{-3,-11,2\}$  \\
$\{-1,2,11\}$ & $\{-1,3,11\}$ & $\{-1,7,19\}$ & $\{-2,-7,5\}$ & $\{-3,-11,-19\}$ \\
           & $\{-1,3,19\}$ &            &            & $\{-3,-11,17\}$ \\
\end{tabular}.
\end{center}

\vspace{1em}

In addition to considering the class number 1 problem, there are several other lists of quadratic and biquadratic fields of fixed class number. The problem of finding a complete list of imaginary quadratic fields $\QQ\left(\sqrt{-a}\right)$, $a>0$, with class number equal to a positive integer $m$ is often referred to as \textit{Gauss' class number $m$ problem}.  

Gauss' class number two problem was solved independently by both Baker ~\cite{Bak66}, ~\cite{Bak71} and Stark ~\cite{Sta75}, between 1966 and 1975. Oesterl\'{e} solved the class number 3 problem in 1983 ~\cite{Oes83}. Arno completed the classification for class number 4 in 1992 ~\cite{Arn92}, and Wagner completed the class number 5, 6 and 7 classifications in 1996 ~\cite{Wag96}. Then, in 1998, Arno, Robinson and Wheeler solved Gauss' class number $m$ problem for odd values of $m$ with $9\leq m\leq23$ ~\cite{ARW}.

 There are fewer such lists for imaginary biquadratic fields of fixed class number. The class number 2 problem for imaginary biquadratic fields was solved by Buell, H.C. Williams and K.S. Williams ~\cite{BWW} in 1977, and the class number 3 problem by Jung, S.W., Kwon, S.H., ~\cite{JK98} in 1998. It is worth noting that the latter result also relied heavily on the application of Kuroda's class number formula.

\section{$n$-quadratic number fields}\label{multiquad_section}

In this section, we will define objects related to $n$-quadratic number fields and discuss some of their more elementary properties. 

Of course, it is redundant to adjoin more than $n$ square roots to $\QQ$ when representing an $n$-quadratic field or to use radicands that are not squarefree.  We will frequently use the abbreviation \textit{sf} to refer to the squarefree part of an integer; i.e. $s.f(20) = 5$. We want to describe these fields as cleanly as possible, so we define the following terms:

\begin{definition} Let $m,n\in\ZZ$ with $1\leq n\leq m$. A list of squarefree rational integers $\{a_1,...,a_m\}$ with $a_i\neq0,1$, $i\in\{1,...,m\}$ is called a \textit{radicand list} for the $n$-quadratic field $\QQ(\sqrt{a_1},...,\sqrt{a_m})$. Further, the list $\{a_1,...,a_m\}$ is called a \textit{primitive radicand list} if $m=n$.  
\end{definition}

Note that the radicand list $\{a_1,...,a_n\}$ is primitive if and only if the following condition holds: for any proper subset $I\subset\{1,...,n\}$, and any $j\in\{1,...,n\}\setminus I$, $a_j$ is not equal to the squarefree part of the product $\prod_{i\in I}a_i$.

We will primarily use the notation of a radicand list when discussing multiquadratic fields to avoid tedium. It is further useful to write these lists in a more canonical way to use as little notation as possible. We use the following definitions and supporting lemmas:

\begin{definition} For any rational prime $p$, a primitive radicand list $\{a_1,...,a_n\}$ of an $n$-quadratic field is said to be \textit{$p$-headed} if $p\nmid a_i$ for any $i\in\{2,...,n\}$.
\end{definition}

Note that a radicand list can be $p$-headed whether or not $p\mid a_1$, as long as $p$ does not divide the other radicands in the list. For example, the radicand list $\{6,3,5\}$ of a triquadratic field is 2-headed and also 101-headed. This list only cannot be described as 3-headed or 5-headed (nor 6-headed, as 6 is not prime). 

\begin{lem}\label{p_headed_lem} For any $n$-quadratic field $K$ and any rational prime $p$ there exists a $p$-headed radicand list $\{a_1,...,a_n\}$ for $K$.
\end{lem}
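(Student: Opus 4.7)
The plan is to start with any primitive radicand list $\{b_1,\ldots,b_n\}$ for $K$ and to modify it so that $p$ divides at most the first entry. If $p$ fails to divide any $b_i$, then the list is already $p$-headed and we are done, so assume otherwise and relabel so that $p\mid b_1$.

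For each $j\in\{2,\ldots,n\}$, I would set $a_j=b_j$ when $p\nmid b_j$ and $a_j=\mathrm{sf}(b_1 b_j)$ when $p\mid b_j$, keeping $a_1=b_1$. The key observation is that when $p\mid b_j$, both $b_1$ and $b_j$ are squarefree and divisible by $p$, so $v_p(b_1 b_j)=2$, and hence $p\nmid \mathrm{sf}(b_1 b_j)=a_j$. Therefore $p$ divides none of $a_2,\ldots,a_n$, which is exactly the $p$-headed condition.

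Two verifications then remain. First, I would check that $\QQ(\sqrt{a_1},\ldots,\sqrt{a_n})=K$: in one direction each $\sqrt{a_j}$ differs from $\sqrt{b_1}\sqrt{b_j}$ (or from $\sqrt{b_j}$) by a rational factor and so lies in $K$, and in the other direction the same rational factor lets one recover $\sqrt{b_j}$ from $\sqrt{a_1}$ and $\sqrt{a_j}$. Second, since $K$ has degree $2^n$ and $\{a_1,\ldots,a_n\}$ is a list of $n$ radicands generating $K$, the list is automatically primitive by the equivalent condition stated immediately after the definition.

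The whole argument reduces to the substitution $b_j \mapsto \mathrm{sf}(b_1 b_j)$, which uses squarefreeness to cancel the offending factor of $p$. I do not foresee a substantial obstacle: at bottom, this is a change of $\FF_2$-basis for the subgroup $\langle b_1,\ldots,b_n\rangle \subset \QQ^\times/(\QQ^\times)^2$ sending $e_j\mapsto e_j+e_1$ for the relevant indices $j$, which is obviously invertible, so both primitivity and the generated field are preserved.
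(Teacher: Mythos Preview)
Your proof is correct and follows essentially the same route as the paper: both arguments replace each offending radicand $b_j$ (with $p\mid b_j$) by the squarefree part of $b_1 b_j$, which is exactly the paper's expression $\dfrac{b_1 b_j}{\gcd(b_1,b_j)^2}$ for squarefree $b_1,b_j$. Your version is slightly more explicit in verifying that the resulting list still generates $K$ and remains primitive, and the $\FF_2$-basis remark is a nice conceptual gloss, but the underlying substitution is identical.
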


\begin{proof} Choose any prime $p$ and assume that $\{a_1',...,a_n'\}$ is a primitive radicand list for $K$ which is not $p$-headed. Then there exists $i>1$ such that $p\mid a_i'$; let $a_1=a_i'$. For each $j\in\{2,...,i-1,i+1,...,n\}$ define
\[a_j:=\left\{\begin{array}{ll}
a_j' & p\nmid a_j' \\
\frac{a_i'a_j'}{\gcd(a_i',a_j')^2} & p\mid a_j'
\end{array}\right.
\]
and similarly let
\[a_i:=\left\{\begin{array}{ll}
a_1' & p\nmid a_1'\\
\frac{a_i'a_1'}{\gcd(a_i',a_1')^2} & p\mid a_1'
\end{array}.\right.
\]
Thus we have a set $\{a_1,...,a_n\}$ such that $p\mid a_1$. Also, it is easy to see that each $a_k$, $1\leq k\leq n$ is squarefree and not equal to 1. Therefore, $\{a_1,...,a_n\}$ satisfies the conditions of a primitive set of generators and is $p$-headed.
\end{proof}

\begin{definition} A primitive radicand list $\{a_1,...,a_n\}$ is said to be in \textit{standard form} if 
\begin{enumerate}
\item it is 2-headed, and
\item for any $i,j\in\{1,...,n\}$ with $2\nmid a_ia_j$, we have that $a_i\equiv a_j\bmod 4$.
\end{enumerate}
\end{definition}

Note that if $K$ is an $n$-quadratic field then there are usually multiple radicand lists for $K$ in standard form.

\begin{lem} For any $n$-quadratic field $K$ there exists a primitive radicand list $\{a_1,...,a_n\}$ for $K$ written in standard form.
\end{lem}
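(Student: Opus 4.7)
The plan is to start from Lemma \ref{p_headed_lem}, which applied with $p=2$ produces a 2-headed primitive radicand list $\{a_1,\ldots,a_n\}$ for $K$; this already secures condition (1) of standard form, so it remains only to arrange condition (2). Because the list is 2-headed, the only entry that can be even is $a_1$, so condition (2) reduces to the single requirement that all odd entries of the list share one residue modulo $4$. If this already holds we are done, so assume otherwise: both an odd $a_i\equiv 1\pmod{4}$ and an odd $a_j\equiv 3\pmod{4}$ occur in the list.

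The key step is to fix a \emph{pivot} $a_p$ with $a_p\equiv 3\pmod{4}$ chosen from among the odd radicands, and for each remaining odd index $i\neq p$ with $a_i\equiv 1\pmod{4}$, replace $a_i$ by $a_i'$ defined as the squarefree part of $a_ia_p$. Setting $d=\gcd(|a_i|,|a_p|)$, one has $a_i'=a_ia_p/d^2$; since $d$ is odd, $d^2\equiv 1\pmod{8}$, and therefore $a_i'\equiv a_ia_p\equiv 3\pmod{4}$. After performing every such replacement, every odd radicand in the list is $\equiv 3\pmod{4}$, so condition (2) is satisfied.

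It remains to verify that the modified list is still a primitive radicand list for $K$. Each $a_i'$ is squarefree and nonzero by construction, and cannot equal $1$, since $a_ia_p$ being a positive perfect square would force $a_i=a_p$, contradicting the residue mismatch. The one subtle case is $a_i=-a_p$, in which $a_i'=-1$: this is a legal radicand, and in that situation $\sqrt{-1}\in K$ already, so the field is preserved. From $\sqrt{a_i'}\in\QQ(\sqrt{a_i},\sqrt{a_p})$ and $\sqrt{a_i}\in\QQ(\sqrt{a_i'},\sqrt{a_p})$ one sees that the new list still generates $K$; since it has $n$ generators for a field of degree $2^n$, it must be primitive. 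The list remains 2-headed because each $a_i'$ is odd as a product of two odd integers. The main place to be careful is the subcase in which $a_1$ is odd with $a_1\equiv 1\pmod{4}$ while some later $a_j\equiv 3\pmod{4}$: there $a_1$ itself must be among the replaced entries and the pivot $a_p$ must be drawn from $\{a_2,\ldots,a_n\}$, but replacing an odd $a_1$ by another odd integer leaves the 2-headed property intact.
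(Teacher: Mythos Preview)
Your proof is correct and follows essentially the same approach as the paper: start from a $2$-headed list via Lemma~\ref{p_headed_lem}, choose a pivot $a_p\equiv 3\pmod 4$, and replace each odd $a_i\equiv 1\pmod 4$ by the squarefree part of $a_ia_p$. You are in fact more careful than the paper's proof in verifying that the modified list remains a primitive radicand list for $K$ and stays $2$-headed, including the edge case $a_i'=-1$.
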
  

\begin{proof} Let $\{a_1',...,a_n'\}$ be a primitive radicand list for $K$, and, by Lemma \ref{p_headed_lem}, we may assume that this list is 2-headed. If this set is not written in standard form, then there exists an $i\in\{1,...,n\}$ such that $a_i'\equiv3\bmod4$, and there also exists a nonempty subset of $A$ of $\{a_1',...,a_n'\}$ such that $a\equiv1\bmod4$ for all $a\in A$.

For each $j\in\{1,...,n\}$ define 
\[a_j:=\left\{\begin{array}{ll}
a_j' & a_j'\equiv2,3\pmod 4 \\
\frac{a_i'a_j'}{\gcd(a_i',a_j')^2} & a_j'\equiv1\bmod4
\end{array}.\right.\]
Then $\{a_1,...,a_n\}$ is a radicand list for $K$ in standard form.
\end{proof}

Some examples of the above definitions are provided in the following table:

\vspace{0.5em}

\begin{center}
\begin{tabular}{|c|c|c|c|}
\hline
4-quadratic  & a primitive  &  a 3-headed & a radicand list for $K$\\
number field $K$ & radicand list for $K$ & radicand list for $K$&  in standard form \\
\hline
$\QQ\left(\sqrt{2},\sqrt{6},\sqrt{7},\sqrt{3},\sqrt{13}\right)$ & $\{2,6,7,13\}$ & $\{6,2,7,13\}$ & $\{2,3,7,39\}$\\
\hline
$\QQ\left(\sqrt{-17},\sqrt{20},\sqrt{7},\sqrt{-1}\right)$ & $\{-17,5,7,-1\}$ & $\{-17,5,7,-1\}$ & $\{-17,-5,7,-1\}$\\
\hline
$\QQ\left(\sqrt{-3},\sqrt{5},\sqrt{-7},\sqrt{17}\right)$ & $\{-3,5,-7,17\}$ & $\{-3,5,-7,17\}$ & $\{-3,5,-7,17\}$\\
\hline
\end{tabular}
\end{center}

\vspace{1em}

In addition to using these standards when writing radicand lists, we will find that in the case of imaginary $n$-quadratic fields it may be useful to write the radicand list in another way, where each radicand is a negative integer. The existence (and a more formal description) of such a list is established below:

\begin{lem} Let $K$ be an imaginary $n$-quadratic field. Then there exist positive squarefree integers $a_1,...,a_n$ such that $\{-a_1,...,-a_n\}$ is a primitive radicand list for $K$.
\end{lem}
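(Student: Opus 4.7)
The plan is to start from any primitive radicand list for $K$ and do a sign-flipping trick analogous to the head-swap in Lemma~\ref{p_headed_lem}: use one distinguished negative generator to convert all the positive generators to negative ones, taking squarefree parts to stay inside the class of radicand lists.

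\medskip

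\noindent\textbf{Step 1: Find a negative generator.} Start with any primitive radicand list $\{b_1,\ldots,b_n\}$ of $K$. Since $K$ is imaginary, not every $b_i$ can be positive: otherwise $K\subset\RR$. After reindexing, assume $b_1<0$.

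\medskip

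\noindent\textbf{Step 2: Convert the positive $b_j$'s.} For $j\in\{2,\ldots,n\}$ set
\[
a_j \;:=\; \begin{cases} b_j, & b_j<0,\\[2pt] \mathrm{sf}(b_1 b_j), & b_j>0,\end{cases}
\]
and let $a_1:=b_1$. Because $b_1<0$, each $a_j$ is a negative squarefree integer, so $a_j=-c_j$ with $c_j$ a positive squarefree integer, as required.

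\medskip

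\noindent\textbf{Step 3: Same field.} One checks $\QQ(\sqrt{a_1},\ldots,\sqrt{a_n})=K$. Indeed, each $\sqrt{a_j}$ lies in $K$ (either $a_j=b_j$ directly, or $\sqrt{a_j}=\sqrt{b_1}\,\sqrt{b_j}/d$ where $d^2$ is the square pulled out when taking the squarefree part). Conversely, when $b_j>0$ we recover $\sqrt{b_j}=d\,\sqrt{a_j}/\sqrt{b_1}\in\QQ(\sqrt{a_1},\sqrt{a_j})$, so all original generators lie in the new field.

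\medskip

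\noindent\textbf{Step 4: Primitivity.} A radicand list with $n$ elements is primitive for $K$ precisely when $[\QQ(\sqrt{a_1},\ldots,\sqrt{a_n}):\QQ]=2^n$, since each extra compatible relation would drop the degree. Because the new list generates $K$, which has degree $2^n$ over $\QQ$, the list $\{a_1,\ldots,a_n\}=\{-c_1,\ldots,-c_n\}$ is automatically primitive. Equivalently, if some $a_j$ equalled $\mathrm{sf}(\prod_{i\in I} a_i)$ for a proper $I\not\ni j$, pulling this back through Step~3 would produce a non-trivial multiplicative relation among the $\sqrt{b_i}$'s modulo squares in $\QQ^\times$, contradicting primitivity of $\{b_1,\ldots,b_n\}$.

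\medskip

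\noindent There is no real obstacle here; the mild point is Step~4, where one must verify that taking squarefree parts of pairwise products with $b_1$ cannot accidentally collapse the field. This follows from either of the two equivalent angles above, and the rest is bookkeeping.
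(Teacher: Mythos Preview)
Your proof is correct and follows essentially the same approach as the paper: pick one negative generator and multiply every positive generator by it (taking squarefree parts) to flip signs. The paper orders the list so that all negative entries come first and uses $a_1'$ as the fixed multiplier, while you simply reindex so $b_1<0$; the construction is otherwise identical, and you supply the primitivity justification in Step~4 that the paper omits.
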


\begin{proof} Let $\{a_1',...,a_n'\}$ be a primitive radicand list for $K$. Since $K$ is not totally real, at least one element of this radicand list must be negative. Thus, without loss of generality assume this list is ordered so that for some $i$, $1<i\leq n$ we have that $a_1',...,a_i'<0$ and $a_{i+1}',...,a_n'>0$. Let $a_j=|a_j'|$ for all $j\leq i$. Also, for $j$ satisfying $i<j\leq n$ set $a_j=|sf(a_1'a_j')|$. Then $\{-a_1,...,-a_n\}$ is a primitive radicand list for $K$ satisfying the lemma.
\end{proof}

\subsection{Subfields}

The Galois group of a multiquadratic field is a well-known abelian group: if $K$ is an $n$-quadratic field then the extension $K/\QQ$ is Galois, with Galois group $G:=\text{Gal}(K/\QQ)\cong\bigoplus_{i=1}^n(\ZZ/2\ZZ)$. Thus, we immediately know the number of subfields of $K$, along with their degrees. At times, we can glean information about a multiquadratic field simply from its quadratic subfields. Thus we begin by observing that any $n$-quadratic field with $n\geq1$ has $2^n-1$ quadratic subfields.

To further describe these quadratic subfields, we need the following definition. Its usefulness will become clear by the lemma immediately following it.

\begin{definition} Let $m,n\in\ZZ$ with $1\leq n\leq m$. A list of squarefree rational integers $\{a_1,...,a_m\}$ is called a \textit{complete radicand list} for an $n$-quadratic field if and only if the following conditions hold:
\begin{enumerate}
\item $m=2^n-1$ and
\item $a_i\neq a_j$ for all $i,j\in\{1,...,m\}$ with $i\neq j$.
\end{enumerate}
\end{definition}

Note that the complete radicand list for a number field is much larger than the primitive radicand list. For example the $n$-quadratic field with primitive radicand list $\{-1,2,3,5\}$ has complete radicand list 
\[\{-1,-2,2,-3,3,-5,5,-6,6,-10,10,-15,15,-30,30\}.\]

\begin{lem} If $K$ is any $n$-quadratic field, then the set of squarefree integers, $\{a_1,...,a_{2^n-1}\}$ is a complete radicand list for $K$ if and only if the fields given by $\QQ(\sqrt{a_i})$, $1\leq i\leq2^n-1$, are exactly the $2^n-1$ distinct quadratic subfields of $K$.
\end{lem}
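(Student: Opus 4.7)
The plan is to reduce both directions of the lemma to two standard ingredients. The first is that $\text{Gal}(K/\QQ) \cong (\ZZ/2\ZZ)^n$ has exactly $2^n - 1$ subgroups of index $2$, so by the Galois correspondence $K$ has exactly $2^n - 1$ distinct quadratic subfields. The second is the squarefree identification: for squarefree integers $a, b \notin \{0, 1\}$, the equality $\QQ(\sqrt{a}) = \QQ(\sqrt{b})$ holds if and only if $a = b$ (the nontrivial direction follows because the equality forces $ab$ to be a rational square, which for squarefree $a, b$ forces $a = b$).

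For the forward direction, I would assume $\{a_1, \ldots, a_{2^n - 1}\}$ is a complete radicand list for $K$. Each $a_i$ is squarefree with $a_i \notin \{0, 1\}$, so each $\QQ(\sqrt{a_i})$ is a quadratic subfield of $K$. Invoking the squarefree identification together with the distinctness clause in the definition, the $\QQ(\sqrt{a_i})$ are themselves pairwise distinct, and a count against the first ingredient forces them to be all quadratic subfields of $K$.

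For the reverse direction, I would assume the $\QQ(\sqrt{a_i})$ are precisely the $2^n - 1$ distinct quadratic subfields of $K$. Distinctness of the fields gives distinctness of the $a_i$ via the squarefree identification, and quadraticity of each subfield forces $a_i \notin \{0, 1\}$; this verifies the two numerical conditions in the definition. What remains is to confirm that $\{a_1, \ldots, a_{2^n-1}\}$ is in fact a radicand list for $K$, i.e.\ that $K = \QQ(\sqrt{a_1}, \ldots, \sqrt{a_{2^n-1}})$. For this I would appeal to the fact that $K$ is the compositum of its quadratic subfields: the intersection of all index-$2$ subgroups of $(\ZZ/2\ZZ)^n$ is trivial, so under the Galois correspondence the join of the corresponding fixed fields is all of $K$. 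No step looks like a serious obstacle; the only ingredient beyond pure bookkeeping is the squarefree identification fact.
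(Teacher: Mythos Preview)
The paper states this lemma without proof, treating it as essentially definitional; there is no argument in the paper to compare against. Your proposal is a correct and clean fleshing-out of exactly the kind of reasoning the author is taking for granted: the Galois count of index-$2$ subgroups, the squarefree identification $\QQ(\sqrt{a})=\QQ(\sqrt{b})\iff a=b$, and the compositum fact that $K$ is generated by its quadratic subfields. Nothing is missing.
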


It is easy to see that, given a primitive radicand list $\{a_1,...,a_n\}$ for an $n$-quadratic field $K$, we may construct a complete radicand list for $K$, and that the set of integers in this list is unique. The list is exactly 
\[\left\{sf\left(\prod_{i\in I}a_i\right)\text{ such that } \emptyset\subset I\subseteq\{1,...,n\}\right\}.\]

Now that we can determine a complete list of quadratic subfields of any $n$-quadratic field, we consider the real and imaginary subfields when negative radicands are present:

\begin{lem}\label{numberOfQuadraticSubfields} Let $K$ be an imaginary $n$-quadratic number field with $n>1$. Then there are $2^{n-1}$ imaginary quadratic subfields and $2^{n-1}-1$ real quadratic subfields of $K$.
\end{lem}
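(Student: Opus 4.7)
The plan is to exploit the previous lemma, which guarantees that any imaginary $n$-quadratic field $K$ admits a primitive radicand list all of whose entries are negative. Write this list as $\{-a_1,\ldots,-a_n\}$ with $a_i>0$ squarefree. By the description of complete radicand lists given earlier, the $2^n-1$ quadratic subfields of $K$ correspond bijectively to the nonempty subsets $I\subseteq\{1,\ldots,n\}$, with the subfield for $I$ being $\QQ\bigl(\sqrt{\mathrm{sf}(\prod_{i\in I}(-a_i))}\bigr)$.

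Next I would observe that the sign of $\prod_{i\in I}(-a_i)$ is simply $(-1)^{|I|}$, since each $a_i$ is positive. Taking squarefree parts does not change the sign, so the subfield indexed by $I$ is imaginary if and only if $|I|$ is odd, and real if and only if $|I|$ is even (and nonempty).

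Counting then finishes the proof: the number of subsets of $\{1,\ldots,n\}$ of odd cardinality is
\[
\sum_{k\text{ odd}}\binom{n}{k}=2^{n-1},
\]
while the number of nonempty subsets of even cardinality is
\[
\sum_{k\text{ even},\,k\ge 2}\binom{n}{k}=2^{n-1}-1.
\]
These give exactly $2^{n-1}$ imaginary and $2^{n-1}-1$ real quadratic subfields, as claimed.

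The only conceptual point to be careful about is that the $2^n-1$ subfields listed are genuinely distinct—but this is guaranteed by the definition of a complete radicand list, together with the fact that the squarefree integers $\mathrm{sf}(\prod_{i\in I}(-a_i))$ are pairwise distinct because the list $\{-a_1,\ldots,-a_n\}$ is primitive. So there is no real obstacle; the proof is essentially a parity count once the all-negative radicand list is in hand.
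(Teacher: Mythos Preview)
Your proof is correct and follows essentially the same approach as the paper: choose an all-negative primitive radicand list, observe that the sign of $\prod_{i\in I}(-a_i)$ is $(-1)^{|I|}$, and count odd versus nonempty even subsets. The paper phrases the counts as upper bounds that are then shown to be sharp because they sum to $2^n-1$, whereas you invoke the bijection with the complete radicand list directly; this is a minor streamlining but not a different argument.
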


\begin{proof} Since $K$ is an imaginary $n$-quadratic field there exist positive integers $a_1,...,a_n$ such that $\{-a_1,...,-a_n\}$ is a primitive radicand list for $K$. 

We begin by finding an upper bound on the number of imaginary quadratic subfields of $K$: $K$ contains the $n$ imaginary quadratic fields given by $\QQ(\sqrt{-a_i})$, $1\leq i\leq n$. Also, if $n\geq3$, we see that $K$ will also contain the $n\choose 3$ imaginary quadratic fields given by $\QQ(\sqrt{-a_ia_ja_k})$, $1\leq i,j,k\leq n$, and $i,j,k$ all distinct. Similarly, any product $-\pi$ of an odd number $\ell$ of distinct $-a_i$'s will give rise to an imaginary quadratic subfield $\QQ(\sqrt{-\pi})$. There will be $n\choose\ell$ such subfields, and other than these constructions of imaginary quadratic subfields of $K$, 
 other possibilities. Therefore, the number of imaginary quadratic subfields of $K$ is at most
\[\#Im\leq {n\choose 1}+{n\choose 3}+{n\choose 5}+\cdots+{n\choose {2\left\lfloor \frac{n-1}{2}\right\rfloor}+1}=2^{n-1}.\]
Now let's consider the number of real quadratic subfields of $K$. Using an argument similar to the one above, we construct all possible real quadratic fields by taking products of an even number of the $-a_i$'s, $1\leq i\leq n$. This gives an upper bound on the number of real quadratic subfields of $K$:
\[\#Re\leq {n\choose 2}+{n\choose 4}+{n\choose 6}+\cdots+{n\choose {2\left\lfloor \frac{n-1}{2}\right\rfloor}}=2^{n-1}-1.\]
Thus  $\#Im+\#Re\leq 2^{n-1}+2^{n-1}-1=2^n-1$. But this upper bound is equal to the total number of quadratic subfields of $K$. Therefore these upper bounds are exactly equal to the number of these fields, so the lemma is established.
\end{proof}

\subsection{Ramification of primes in $n$-quadratic fields} 

In the case of quadratic fields $\QQ(\sqrt{a})$, with $a\in\ZZ$ squarefree, we know exactly when a rational prime ramifies. For any prime $p$, if $p|a$ then $p$ ramifies and its factorization is
\[p\cO_{\QQ(\sqrt{a})}=(p,\sqrt{a})^2.\]
Additionally, if $a\equiv3\bmod4$ then $2$ ramifies;
\[2\cO_{\QQ(\sqrt{a})}=(2,1+\sqrt{a})^2.\]

Considering this information, one would probably conjecture correctly which rational primes ramify in an $n$-quadratic field, $n\geq2$, with primitive radicand list $\{a_1,a_2,...,a_n\}$: those primes which divide any of the radicands $a_i$, and 2 will also ramify if any of the $a_i$'s are congruent to 3 mod 4. Let's look at why this is the case, and use our discussion to further explore important properties of the ramification of primes.

Let $\Delta_K\in\ZZ$ be the discriminant of a number field $K$. It is standard fact that $p\in\ZZ$ ramifies in $\cO_K$ if and only if $p|\Delta_K$. Thus we will approach this discussion by first presenting the following theorem.
\begin{thm} [ {Schmal \cite[Theorem 2.1]{Sch}} ] Let $K$ be an $n$-quadratic field, $n\geq2$, with radicand list $\{a_1,a_2,...,a_n\}$ written in standard form. Let $\prod_{j=1}^sp_j^{m_j}$ be the prime factorization in $\ZZ$ of the product of the radicands, $\prod_{i=1}^na_i$. Then
\[\Delta_K=(2^ep_1\cdots p_s)^{2^{n-1}},\]
where
\[ e= \left\{\begin{array}{ll}
0 & \text{ if }a_1\equiv1\bmod4\\
2 & \text{ if }(a_1,a_2)\equiv(2,1)\text{ or }(3,1)\bmod4 \\
3 & \text{ if }(a_1,a_2)\equiv(2,3)\bmod4\end{array}\right.\]
\end{thm}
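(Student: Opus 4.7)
The plan is to invoke the conductor--discriminant formula for the abelian extension $K/\QQ$. Since $\text{Gal}(K/\QQ)\cong(\ZZ/2\ZZ)^n$, every nontrivial character of $\text{Gal}(K/\QQ)$ is quadratic and is cut out by a unique quadratic subfield $F\subset K$, with conductor $\mathfrak{f}(\chi)=|d_F|$. The formula therefore reduces the computation to
\[
|\Delta_K|=\prod_{F}|d_F|,
\]
where $F$ ranges over the $2^n-1$ quadratic subfields of $K$. By the correspondence developed earlier between primitive and complete radicand lists, these subfields are exactly the fields $\QQ(\sqrt{m_I})$ with $m_I=sf(\prod_{i\in I}a_i)$ for nonempty $I\subseteq\{1,\ldots,n\}$, and $d_F$ equals $m_I$ when $m_I\equiv 1\bmod 4$ and $4m_I$ otherwise.

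I would first dispose of the odd primes. For each odd prime $p$ dividing $\prod_i a_i$, set $T_p=\{i:p\mid a_i\}$, which is nonempty. Because each $a_i$ is squarefree, $p\mid m_I$ iff $|I\cap T_p|$ is odd, and a routine parity count shows that exactly $2^{n-1}$ (necessarily nonempty) subsets $I\subseteq\{1,\ldots,n\}$ satisfy this. Hence $p$ appears in exactly $2^{n-1}$ of the $|d_F|$ and contributes $p^{2^{n-1}}$ to the product, matching the odd part of the asserted formula.

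The main obstacle is the 2-adic analysis, which is where the standard-form hypothesis does the work. The 2-headedness forces $m_I$ to be even precisely when $a_1$ is even and $1\in I$ (contributing $v_2(d_F)=3$ in that case), while condition (2) of standard form collapses all odd $a_i$ into a single residue class mod $4$, so when $m_I$ is odd its class modulo $4$ is determined entirely by that common residue and by the parity of $|I\cap\{i:a_i\text{ odd}\}|$. I would then run through the four standard-form possibilities. If $a_1\equiv 1\bmod 4$ (so all $a_i$ are odd and $\equiv 1\bmod 4$), every $m_I\equiv 1\bmod 4$ and $v_2(|\Delta_K|)=0$, giving $e=0$. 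If $a_1\equiv 2\bmod 4$ with odd $a_i\equiv 1\bmod 4$, the $2^{n-1}$ subsets containing $1$ each contribute $v_2(d_F)=3$ while the remaining subfields contribute $0$, yielding $v_2=3\cdot 2^{n-1}$; after absorbing the $2^{2^{n-1}}$ coming from $p_1=2$ this leaves $e=2$. If all $a_i$ are odd and $\equiv 3\bmod 4$, the $2^{n-1}$ subsets of odd cardinality contribute $2$ each, yielding $v_2=2\cdot 2^{n-1}$ and again $e=2$. Finally, if $a_1\equiv 2\bmod 4$ with odd $a_i\equiv 3\bmod 4$, the two mechanisms combine to give $v_2=4\cdot 2^{n-1}$, and after absorbing $2^{2^{n-1}}$ from $p_1=2$ we obtain $e=3$. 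Multiplying the odd and 2-part contributions produces $|\Delta_K|=(2^e p_1\cdots p_s)^{2^{n-1}}$, as claimed.
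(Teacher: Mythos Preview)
The paper does not supply its own proof of this statement; it is quoted from Schmal and used as a black box. Your argument via the conductor--discriminant formula is correct and is in fact the standard route to discriminant formulas for multiquadratic fields. The odd-prime count is clean, and your case analysis of the $2$-adic valuation under the standard-form hypothesis is accurate in each of the four cases. (Note that the case the paper writes as $(a_1,a_2)\equiv(3,1)\bmod 4$ must, under standard form, actually read $(3,3)$, since condition~(2) forces all odd radicands to share a residue class mod $4$; your treatment of ``all $a_i$ odd and $\equiv 3\bmod 4$'' handles exactly this.) One small point you leave implicit: the statement asserts a value for $\Delta_K$ rather than $|\Delta_K|$, but for $n\geq 2$ the number of complex places of $K$ is either $0$ or $2^{n-1}$, both even, so $\Delta_K>0$ and the distinction is harmless.
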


Therefore:

\begin{cor}\label{inertiaFields} Let $K$ be an $n$-quadratic field with radicand list $\{a_1,a_2,...,a_n\}$ written in standard form. For any prime $p\in\ZZ$ the following describes the inertia field of $K$ with respect to the prime $p$ and the ramification index of $p$ in $\cO_K$:

First, when $p=2$ we have: 
\begin{enumerate}
\item If $a_1\equiv a_2\equiv\cdots\equiv a_n\equiv 1\pmod4$ then $(2)$ is unramified in $\cO_K$ and the inertia field is $K$.
\item If $a_1\not\equiv1\pmod4$ and $a_i\equiv1\pmod4$, $2\leq i\leq n$, then $2\cO_K$ has ramification index 2 and the inertia field is the $(n-1)$-quadratic field $\QQ(\sqrt{a_2},...,\sqrt{a_n})$.
\item If $a_1\equiv2\pmod4$ and $a_i\equiv3\pmod4$, $2\leq i\leq n$, then $2\cO_K$ has ramification index 4. The inertia field is the $(n-2)$-quadratic field $\QQ(\sqrt{a_2a_3},...,\sqrt{a_2a_n})$.
\end{enumerate}

When $p$ is an odd prime we have:
\begin{enumerate}
\item If $p\nmid a_i$ for all $i\in\{1,...,n\}$ then $(p)$ is unramified in $\cO_K$ and the inertia field is $K$.
\item Otherwise $p\cO_K$ has ramification index 2. Let $\{a_1',...,a_n'\}$ be a $p$-headed radicand list for $K$, then the $(n-1)$-quadratic field $\QQ(\sqrt{a_2'},...,\sqrt{a_n'})$ is the inertia field of $K$ for $p$.
\end{enumerate}
\end{cor}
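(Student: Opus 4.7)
The plan is to combine Schmal's discriminant formula (the theorem just stated) with two standard facts: (a) a prime $p$ ramifies in $\cO_K$ if and only if $p \mid \Delta_K$; and (b) since $K/\QQ$ is abelian, the inertia group at $p$ does not depend on the choice of prime above $p$, so the inertia field is the \emph{unique} maximal subfield $L \subseteq K$ in which $p$ is unramified, and this $L$ satisfies $[K:L] = e_p$.

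Granted (a) and (b), the same template handles every sub-case: take the subfield $L$ named in the statement, exhibit a primitive radicand list for $L$ that is already in standard form (and, for odd $p$, arises as the tail of a $p$-headed list), apply Schmal's theorem to $L$ to read off $\Delta_L$ and confirm $p \nmid \Delta_L$, then match $[K:L]$ against the claimed value of $e_p$. To conclude that the candidate $L$ really is the inertia field we must also verify the inequality $e_p \geq [K:L]$, equivalently that no strictly larger subfield is unramified.

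The four ``easy'' sub-cases dispatch quickly. In both ``Case 1'' sub-cases, Schmal's formula gives $p \nmid \Delta_K$ outright, so $p$ is unramified in $K$ and the inertia field equals $K$. In both ``Case 2'' sub-cases $[K:L] = 2$, and the relevant generators of $L$ either avoid $p$ (odd prime, $p$-headed tail) or are all $\equiv 1 \pmod 4$ (the $p = 2$ tail), which places $L$'s radicand list in the regime of Schmal where $p$ does not appear in $\Delta_L$. Since Schmal applied to $K$ simultaneously forces $p \mid \Delta_K$, the ramification index is exactly $2$ and $L$ is the inertia field.

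The main obstacle is \textbf{Case 3} for $p = 2$, where $[K:L] = 4$. The Schmal check for $L = \QQ(\sqrt{a_2 a_3}, \ldots, \sqrt{a_2 a_n})$ is routine: each generator $a_2 a_i$ is odd (since $a_2, \ldots, a_n$ are odd by assumption) and $a_2 a_i \equiv 3 \cdot 3 \equiv 1 \pmod 4$, so the list is in standard form with $e = 0$ and $2 \nmid \Delta_L$. The harder step is the lower bound $e_2 \geq 4$. My plan here is to count unramified quadratic subfields of $K$ directly from the complete radicand list: a quadratic subfield $\QQ(\sqrt{\text{sf}(\prod_{i \in I} a_i)})$, with $\emptyset \neq I \subseteq \{1, \ldots, n\}$, is unramified at $2$ precisely when the squarefree part lies in $1 \pmod 4$. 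Reducing $\pmod 4$ and using $a_1 \equiv 2$, $a_j \equiv 3$ for $j \geq 2$, this happens if and only if $1 \notin I$ and $|I|$ is even, giving exactly $2^{n-2} - 1$ unramified quadratic subfields. Since the proposed $L$ is $(n-2)$-quadratic and hence has exactly $2^{n-2} - 1$ quadratic subfields, each of which is one of the unramified subfields just enumerated, $L$ must coincide with the maximal unramified subfield. Therefore $e_2 = [K:L] = 4$, as claimed.
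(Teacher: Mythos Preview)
Your argument is correct. The paper gives no proof of this corollary---it appears immediately after Schmal's discriminant theorem prefaced only by the word ``Therefore:''---so your write-up supplies precisely the details the paper leaves to the reader, and your explicit count of the quadratic subfields of $K$ unramified at $2$ in Case~3 is a clean way to pin down the inertia field and force $e_2=4$. One small point worth tightening: when you invoke Schmal for the subfield $L$ in the odd-prime Case~2, the $p$-headed tail $\{a_2',\ldots,a_n'\}$ need not itself be in standard form, but this is harmless since the set of odd primes dividing $\Delta_L$ depends only on which odd primes divide the radicands, and that set is unchanged when one passes to a standard-form list for $L$.
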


From these facts about the ramification of primes in multiquadratic fields, we get the following lemma:

\begin{lem}\label{numberOfRamifiedPrimes} Let $K$ be an $n$-quadratic field. Then:
\begin{enumerate} 
\item If $K$ is totally real, the number of primes ramified in $K$ is at least $n$.
\item If $K$ is imaginary, the number of primes ramified in $K$ is at least $n-1$.
\end{enumerate}
\end{lem}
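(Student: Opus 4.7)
The plan is to combine Schmal's discriminant formula, stated just above, with the observation that primitivity of a radicand list is equivalent to $\FF_2$-linear independence of its entries in $\QQ^*/(\QQ^*)^2$, after which the lemma reduces to a short dimension count.

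Fix a primitive radicand list $\{a_1,\dots,a_n\}$ for $K$ in standard form and let $p_1,\dots,p_s$ be the distinct rational primes dividing $\prod_{i=1}^n a_i$. By Schmal's theorem, $\Delta_K=(2^e p_1\cdots p_s)^{2^{n-1}}$ with $e\in\{0,2,3\}$, so a rational prime ramifies in $\cO_K$ exactly when it lies in $\{p_1,\dots,p_s\}\cup\{2 : e\neq 0\}$. In particular the number of ramified primes is at least $s$, and both parts of the lemma reduce to showing $s\geq n$ in case~(1) and $s\geq n-1$ in case~(2).

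Next, view $V:=\QQ^*/(\QQ^*)^2$ as an $\FF_2$-vector space with basis $\{-1,2,3,5,7,\dots\}$. The classes $[a_1],\dots,[a_n]$ are $\FF_2$-linearly independent in $V$: a nontrivial relation $\prod_{i\in J}a_i\in(\QQ^*)^2$ would yield, on choosing $j\in J$ and setting $I:=J\setminus\{j\}$, the equality $a_j=\mathrm{sf}(\prod_{i\in I}a_i)$, contradicting the characterization of primitivity recorded just after its definition. In case~(1) each $a_i>0$, so the $[a_i]$ lie in the $s$-dimensional subspace spanned by $\{p_1,\dots,p_s\}$, forcing $s\geq n$; in case~(2) they lie in the $(s+1)$-dimensional subspace spanned by $\{-1,p_1,\dots,p_s\}$, forcing $s\geq n-1$. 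The only delicate point is the translation from the combinatorial primitivity condition to $\FF_2$-independence, but once $\mathrm{sf}$ is read as returning the signed squarefree representative (as the paper's earlier proofs tacitly do) this is immediate, and no genuine obstacle remains.
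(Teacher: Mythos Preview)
Your proof is correct and takes a genuinely different route from the paper's. The paper argues by induction on $n$ for the totally real case: it picks a prime $p$ dividing $\pi_K=\prod a_i$, passes to a $p$-headed radicand list, and applies the inductive hypothesis to the $(n-1)$-quadratic subfield $\QQ(\sqrt{a_2'},\dots,\sqrt{a_n'})$, whose radicand product is coprime to $p$. Part~(2) is then deduced from~(1) by descending to the maximal totally real subfield. Your argument instead linearizes the problem: you recognize that primitivity of $\{a_1,\dots,a_n\}$ is exactly $\FF_2$-linear independence of the classes $[a_i]$ in $\QQ^*/(\QQ^*)^2$, and then the bound on $s$ is a one-line dimension count against the span of $\{[p_1],\dots,[p_s]\}$ (totally real) or $\{[-1],[p_1],\dots,[p_s]\}$ (imaginary). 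This is shorter and more conceptual; the paper's induction is really just peeling off one basis vector at a time from the same linear-algebra picture. Your approach also handles both cases uniformly, whereas the paper treats~(2) as a corollary of~(1). The only cost is that you invoke Schmal's formula and the vector-space viewpoint explicitly, while the paper's induction stays closer to the level of elementary divisibility; but since Schmal's theorem is already quoted in the paper, this is no real expense.
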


\begin{proof} First note that if we can prove (1) then (2) immediately follows. That is, if $K$ is an imaginary $n$-quadratic field, then $K$ has a real $(n-1)$-quadratic subfield. Assuming that (1) holds it follows that there are at least $n-1$ primes ramified in this subfield and thus in $K$.

Thus it only remains to prove this lemma for real $n$-quadratic fields. Let $K$ be a real $n$-quadratic field with primitive radicand list $\{a_1,...,a_n\}$, $a_i>1$ squarefree for $1\leq i\leq n$. Then it follows that every prime which divides the product $\pi_K=\prod_{i=1}^na_i$ ramifies in $K$. Therefore, it is sufficient to prove that at least $n$ primes divide the product $\pi_K$. 

We will prove this by induction on $n$. First if $\QQ(\sqrt{a_1})$ is a real quadratic field then clearly at least one prime divides the product $\pi_{\QQ(\sqrt{a_1})}=a_1$. Assume that for any real $(n-1)$-quadratic field $L$, there are at least $(n-1)$ primes dividing the product of the radicands $\pi_L$. Now consider the real $n$-quadratic field $K$. Then we can clearly see that at least one prime $p$ divides the product $\pi_K$. Thus there exists a $p$-headed radicand list $\{a_1',a_2',...,a_n'\}$ such that $p|a_1'$ but $p\nmid a_i'$ for $2\leq i\leq n$. Then there is an $(n-1)$-quadratic subfield $K'$ of $K$ with radicand list $\{a_2',...,a_n'\}$. Since there are at least $n-1$ primes dividing $\pi_{K'}$ and none of these primes are equal to $p$, then there must be at least $n$ primes dividing $\pi_K$.
\end{proof}

It is worth noting that these lower bounds on the number of ramified primes are the best possible. In the real multiquadratic case, an $n$-quadratic field with radicand list $\{2,p_1,p_2,...,p_{n-1}\}$, where $p_1,p_2,...,p_{n-1}$ are distinct odd primes, has exactly n ramified primes. If we were to add $-1$ to this list, we would have the radicand list $\{-1,2,p_1,p_2,...,p_{n-1}\}$ defining an imaginary $n+1$-quadratic field. There are still exactly $n$ ramified primes in this imaginary multiquadratic field.

\begin{lem}\label{nMinus2Field} Let $K$ be an imaginary $n$-quadratic field with $n\geq3$. There exists an odd prime $p$ and a real $(n-2)$-quadratic subfield $k$ of $K$ such that $p$ is unramified in $k$ but is ramified in $K$.
\end{lem}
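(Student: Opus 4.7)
The plan is to locate $k$ using the Galois-theoretic description of $K/\QQ$, whose Galois group $G$ is elementary abelian of order $2^n$.

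First I would show that some odd prime $p$ ramifies in $K$. By Lemma~\ref{numberOfRamifiedPrimes}, at least $n-1 \geq 2$ rational primes ramify in $K$, and since only one of them can equal $2$, at least one odd prime $p$ must ramify. Using Lemma~\ref{p_headed_lem}, choose a $p$-headed primitive radicand list $\{a_1',\ldots,a_n'\}$ for $K$. By Corollary~\ref{inertiaFields}, the inertia field for $p$ is $L := \QQ(\sqrt{a_2'},\ldots,\sqrt{a_n'})$, an $(n-1)$-quadratic subfield of $K$ in which $p$ is unramified, and the inertia subgroup $I := \text{Gal}(K/L)$ has order $2$.

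Next, let $\sigma \in G$ denote complex conjugation and $K^+$ its fixed field, the maximal real subfield of $K$; since $K$ is imaginary, $[K:K^+]=2$, so $\langle\sigma\rangle$ has order $2$. Consider the subgroup $H := I\langle\sigma\rangle \leq G$ and its fixed field $k := K^H$. Since $I \subseteq H$, we have $k \subseteq L$, so $p$ is unramified in $k$; similarly $\langle\sigma\rangle \subseteq H$ forces $k \subseteq K^+$, so $k$ is real.

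It remains to control the degree. If $\sigma \notin I$, then $H = I \cdot \langle\sigma\rangle$ has order $4$, giving $[K:k]=4$, so $k$ is the desired real $(n-2)$-quadratic subfield. The only case needing separate treatment is $\sigma \in I$; then $H=I$ and $k = L$ is instead $(n-1)$-quadratic and real (note $L \subseteq K^+$ in this case). Since $n-1 \geq 2$, one can then replace $k$ by the fixed field in $L$ of any order-$2$ subgroup of $\text{Gal}(L/\QQ) \cong (\ZZ/2\ZZ)^{n-1}$; this is an $(n-2)$-quadratic subfield of $L$, hence real and with $p$ unramified. The main obstacle is simply ensuring the existence of an odd ramified prime and handling both the $\sigma \in I$ and $\sigma \notin I$ cases; once the Galois-theoretic setup is in place, the degree count is automatic from the elementary abelian structure of $G$.
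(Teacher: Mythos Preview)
Your proof is correct and follows essentially the same route as the paper's. Both arguments first locate an odd ramified prime $p$ via Lemma~\ref{numberOfRamifiedPrimes}, pass to its inertia field $L=K_E$ using Corollary~\ref{inertiaFields}, and then split into the cases ``$L$ real'' versus ``$L$ imaginary''; your dichotomy $\sigma\in I$ versus $\sigma\notin I$ is exactly this case split, since $L=K^I$ is real if and only if $\sigma$ fixes $L$, i.e.\ $\sigma\in I$. In the imaginary case your field $k=K^{I\langle\sigma\rangle}=L\cap K^+$ is precisely the maximal totally real subfield $L^+$ of $L$, which coincides with the paper's explicit choice $\QQ\bigl(\sqrt{sf(a_1a_2)},\ldots,\sqrt{sf(a_1a_{n-1})}\bigr)$. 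The only difference is packaging: you phrase the construction Galois-theoretically while the paper writes down radicands, but the resulting subfield $k$ is the same.
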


\begin{proof} By Lemma \ref{numberOfRamifiedPrimes} there are at least $n-1$ primes which ramify in $K$. Since $n\geq3$ at least two primes ramify in $K$, and thus at least one of these primes must be odd. Choose any odd prime which ramifies in $K$ and denote it by $p$. By Corollary \ref{inertiaFields} there exists an $(n-1)$-quadratic subfield $K_E$ of $K$ which is the inertia field for $p$. If $K_E$ is totally real we may choose $k$ to be any $(n-2)$-quadratic subfield of $K_E$. If $K_E$ is imaginary write $K_E=\QQ(\sqrt{-a_1},...,\sqrt{-a_{n-1}})$. Then we may choose $k=\QQ\left(\sqrt{sf(a_1a_2)},...,\sqrt{sf(a_1a_{n-1}})\right)$.
\end{proof}

\section{Results for class numbers of $n$-quadratic fields}\label{previous_results_section}

\subsection{Class number theorems using ramified primes}

The goal of this paper is to lay out a complete list of imaginary $n$-quadratic fields of class number 1. In this section we present two theorems, one which hits this classification with a sledgehammer, eliminating all multiquadratic fields of large enough degree from the list of potential candidates (namely, $n$-quadratic fields with $n\geq6$, and some others). Other techniques will be applied later to imaginary multiquadratic fields of bounded degree. It is still useful to think about the class numbers of $n$-quadratic fields with $n\geq6$ in terms of the latter techniques, as they still give more insight into the actual value of the class number.

This sledgehammer is the following theorem, due to Fr\"{o}lich~\cite{Fro}:

\begin{thm} [ {Fr\"{o}lich \cite[Theorem 5.6]{Fro}}]\label{ramPrimes} Let $K$ be a real Abelian field of 2-power degree. The class number of $K$ is even whenever the number of finite rational primes that ramify in $K$ is greater than or equal to 5.
\end{thm}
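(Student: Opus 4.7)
The plan is to prove the theorem via classical genus theory. Recall that the \emph{genus field} of an abelian extension $K/\QQ$ is the maximal extension $K^{*}$ of $K$ which is unramified over $K$ and remains abelian over $\QQ$. By class field theory, $K^{*}$ sits inside the Hilbert class field of $K$, so the \emph{genus number} $g(K):=[K^{*}:K]$ divides $h(K)$. Thus it suffices to show that $g(K)$ is even whenever at least five rational primes ramify in $K$.

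By Kronecker--Weber the real abelian 2-extension $K/\QQ$ corresponds to a finite group $X$ of Dirichlet characters of 2-power order, all satisfying $\chi(-1)=1$ since $K\subset\RR$. For each rational prime $\ell$ let $Y_{\ell}$ denote the group of all even Dirichlet characters of 2-power order whose conductor is a power of $\ell$. Using the description of inertia subgroups in abelian extensions, one identifies the genus field with the field corresponding to the character group $X^{*}=X\cdot\prod_{\ell\in S}Y_{\ell}$, where $S$ is the set of primes ramifying in $K$; in particular $g(K)=[X^{*}:X]$.

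The core task is to show $[X^{*}:X]\geq 2$ whenever $|S|\geq 5$. For each odd $\ell\in S$ with $\ell\equiv 1\pmod 4$, the group $Y_{\ell}$ contains the quadratic character of conductor $\ell$ corresponding to $\QQ(\sqrt{\ell})$, and these characters are $\FF_{2}$-independent modulo $X$. For primes $\ell\equiv 3\pmod 4$, the only even characters in $Y_{\ell}$ are trivial, but a product of two odd quadratic characters for such primes yields an even quadratic character (corresponding to $\QQ(\sqrt{\ell\ell'})$), so pairs of $3\bmod 4$ primes in $S$ still contribute. Finally, $Y_{2}$ contains the quadratic character of conductor $8$ giving rise to $\QQ(\sqrt{2})$. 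Combining these contributions yields a lower bound of the form $\log_{2}[X^{*}:X]\geq |S|-c$ for some absolute constant $c\leq 4$, and hence $|S|\geq 5$ forces $[X^{*}:X]\geq 2$.

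The main obstacle is this last bookkeeping step. The penalty $c$ records three independent losses: the restriction to even characters (which costs at most one from the primes $\equiv 3\pmod 4$, since one of them must be sacrificed in order to pair up the rest), the local contribution at $\ell=2$ (whose character group at conductors $4$ and $8$ behaves differently from that at odd primes), and the precise inclusion relations among $X$, $X^{*}$, and the individual $Y_{\ell}$. Verifying $c\leq 4$ requires a careful case analysis rather than a single clean computation, and the threshold of five ramified primes in the hypothesis is exactly the point at which the gains from the ramified primes outstrip these bookkeeping losses.
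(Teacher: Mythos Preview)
The paper does not supply its own proof of this statement; it is quoted directly from Fr\"ohlich's monograph \cite{Fro}, so there is no in-paper argument to compare against beyond the citation. That said, your genus-theory proposal has a genuine gap and cannot establish the theorem as stated.

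Your claimed inequality $\log_{2}[X^{*}:X]\geq |S|-c$ with $c\le 4$ is false. Take five primes $p_{1},\dots,p_{5}$ each congruent to $1\pmod 4$, say $5,13,17,29,37$, and let $K=\QQ(\sqrt{p_{1}},\dots,\sqrt{p_{5}})$. This is a real abelian field of degree $2^{5}$ in which exactly these five primes ramify, each with ramification index $2$. The character group $X=\langle\chi_{p_{1}},\dots,\chi_{p_{5}}\rangle$ already equals the full product of its local component groups, and every element of $X$ is even; hence the (wide) genus field of $K$ is $K$ itself and $[K^{*}:K]=1$. Genus theory therefore gives no information about the parity of $h(K)$, yet Fr\"ohlich's theorem asserts that $h(K)$ is even.

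There is also an earlier error: your identification $X^{*}=X\cdot\prod_{\ell\in S}Y_{\ell}$, with $Y_{\ell}$ the full group of even $2$-power-order characters of $\ell$-power conductor, does not describe the genus field. The field corresponding to that character group is in general \emph{ramified} over $K$, since adjoining all of $Y_{\ell}$ can enlarge the inertia group at $\ell$ beyond $X_{\ell}=\{\chi_{\ell}:\chi\in X\}$. The correct character group for the genus field consists of the even characters whose $\ell$-component lies in $X_{\ell}$ for every $\ell$, and this can coincide with $X$, as in the example above.

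What makes Fr\"ohlich's proof work is precisely that it goes beyond the genus field. He constructs unramified extensions $L/K$ for which $\text{Gal}(L/\QQ)$ is a \emph{central}, but not necessarily abelian, extension of $\text{Gal}(K/\QQ)$, and relates the existence of such $L$ strictly larger than the genus field to the Schur multiplier of $\text{Gal}(K/\QQ)$ modulo contributions from the ramified primes. The threshold of five ramified primes is exactly what ensures a nontrivial piece of the Schur multiplier survives these local obstructions. Genus theory sees only the abelian-over-$\QQ$ part of the Hilbert class field and so misses this phenomenon entirely.
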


In light of this theorem, along with Corollary \ref{numberOfRamifiedPrimes} we can deduce:

\begin{cor} Let $h_K$ denote the class number of an $n$-quadratic field $K$. Then
\begin{enumerate}
\item if $K$ is totally real and $n\geq5$ then $2\mid h_K$ and
\item if $K$ is imaginary and $n\geq6$ then $2\mid h_K$
\end{enumerate}
\end{cor}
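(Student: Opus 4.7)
The plan is to prove the two parts separately, deriving (2) from (1) by passing to the maximal totally real subfield of an imaginary $n$-quadratic field.

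For part (1), the argument is essentially a direct combination of the stated results. If $K$ is a totally real $n$-quadratic field with $n\geq 5$, then $K/\QQ$ is abelian with Galois group $(\ZZ/2\ZZ)^n$, hence $K$ is a real abelian field of 2-power degree $2^n$. By Lemma \ref{numberOfRamifiedPrimes}(1), at least $n\geq 5$ rational primes ramify in $K$. Theorem \ref{ramPrimes} then yields $2\mid h_K$.

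For part (2), I cannot apply Theorem \ref{ramPrimes} directly to $K$ because that theorem requires $K$ to be real. The natural workaround is to pass to a large enough real subfield. Writing a primitive radicand list for $K$ in the form $\{-a_1,\dots,-a_n\}$ with each $a_i>0$ squarefree (possible by the final lemma of Section \ref{multiquad_section}), the maximal totally real subfield is
\[
K^+ \;=\; \QQ\bigl(\sqrt{a_1 a_2},\,\sqrt{a_1 a_3},\,\dots,\,\sqrt{a_1 a_n}\bigr),
\]
which is a real $(n-1)$-quadratic field (it is the fixed field of complex conjugation in $K$, and $[K:K^+]=2$). Since $n-1\geq 5$, part (1) applies to $K^+$ and gives $2\mid h_{K^+}$.

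The final step is to bridge from $K^+$ to $K$. Because $K$ is totally imaginary and $K^+$ is its maximal totally real subfield with $[K:K^+]=2$, the extension $K/K^+$ is a CM extension. The standard CM class number relation $h_K = h_{K^+}\cdot h_K^-$, with $h_K^-\in\ZZ_{>0}$ (see, e.g., Washington, \emph{Introduction to Cyclotomic Fields}, Theorem 4.10), yields the divisibility $h_{K^+}\mid h_K$, and therefore $2\mid h_K$.

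The only nonroutine input is the CM divisibility $h_{K^+}\mid h_K$; this is where the argument is not purely formal from what precedes in the paper, and is the step that deserves an explicit citation. Everything else is a direct assembly of Lemma \ref{numberOfRamifiedPrimes}, Theorem \ref{ramPrimes}, and the identification of a real $(n-1)$-quadratic subfield inside any imaginary $n$-quadratic field.
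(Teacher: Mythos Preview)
Your proof is correct. For part (1) it is exactly the paper's intended deduction: combine Lemma~\ref{numberOfRamifiedPrimes}(1) with Theorem~\ref{ramPrimes}.

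For part (2) there is a small but real difference worth noting. The paper simply asserts that the corollary follows from Theorem~\ref{ramPrimes} together with Lemma~\ref{numberOfRamifiedPrimes}, apparently applying Fr\"{o}hlich's theorem directly to the imaginary field $K$ (where $n-1\geq 5$ primes ramify) without addressing the ``real'' hypothesis in the stated form of Theorem~\ref{ramPrimes}. You instead pass to the maximal totally real subfield $K^{+}$, apply part (1) there, and then invoke the CM divisibility $h_{K^{+}}\mid h_{K}$. This is not a different strategy so much as a rigorous completion of the step the paper leaves implicit: it is the natural way to reconcile the imaginary case with the hypothesis of Theorem~\ref{ramPrimes} as stated. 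The cost is one external citation (the CM relative class number relation); the gain is that your argument actually matches the hypotheses of the theorem you are invoking.
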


Mouhib studied the 2-part of the ideal class group of real 4-quadratic fields with prime radicands and came up with a related result. We will not apply this result directly in this paper, as it focuses on real multiquadratic fields rather than imaginary. However, this is interesting to any reader and relevant to those who wish to push this study further. His theorem states:

\begin{thm} [ {Mouhib \cite[Main Theorem]{Mou}} ]Let $p_1,p_2,p_3,p_4>0$ be distinct primes and $K=\QQ(\sqrt{p_1},\sqrt{p_2},\sqrt{p_3},\sqrt{p_4})$. Then the 2-class group of $K$ is trivial if and only if, after a suitable permutation of the indices, the $p_i$ have one of the following properties:
\begin{enumerate}
\item $p_1=2$, $p_2\equiv p_3\equiv p_4\equiv-1\pmod4$, $\left(\frac{2}{p_2}\right)=-\left(\frac{2}{p_3}\right)=-\left(\frac{2}{p_4}\right)=1$ and $\left(\frac{p_2}{p_3}\right)\left(\frac{p_2}{p_4}\right)=-1$
\item $p_1=2$, $p_2\equiv p_3\equiv p_4\equiv-1\pmod4$, $\left(\frac{2}{p_2}\right)=-1$, and
\begin{enumerate}[(i)]
\item $\left(\frac{p_2}{p_3}\right)=\left(\frac{p_2}{p_4}\right)=-1$ and $\left(\frac{2}{p_3}\right)\left(\frac{2}{p_4}\right)=-1$, or
\item $\left(\frac{2}{p_3}\right)=\left(\frac{2}{p_4}\right)=-1$ and $\left(\frac{p_2}{p_3}\right)\left(\frac{p_2}{p_4}\right)=-1$, or
\item $\left(\frac{p_2}{p_3}\right)\left(\frac{p_2}{p_4}\right)=\left(\frac{2}{p_3}\right)\left(\frac{2}{p_4}\right)=-1$ and $\left(\frac{p_2}{p_3}\right)\neq\left(\frac{2}{p_3}\right)$.
\end{enumerate}
\end{enumerate}
\end{thm}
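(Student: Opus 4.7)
The plan is to analyze the $2$-Sylow subgroup of $\mathrm{Cl}(K)$ by combining genus theory with the iterated Kuroda class number formula of Lemmermeyer cited earlier in the paper.

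A preliminary reduction should explain why each successful case forces $p_1=2$. If none of the $p_i$ equals $2$ and at least one satisfies $p_i\equiv 3\pmod 4$, then rewriting the radicand list in standard form shows that $2$ also ramifies in $K$, so $K$ has at least five ramified primes and $2\mid h_K$ by Theorem~\ref{ramPrimes}. If all four $p_i$ are odd and $\equiv 1\pmod 4$, a direct genus calculation in a biquadratic subfield $\QQ(\sqrt{p_i},\sqrt{p_j})$ already forces positive $2$-class rank unless prescribed Legendre symbol vanishings occur; pushing this through the full tower using Corollary~\ref{inertiaFields} leads to the same obstruction. A parallel analysis of the residues of $p_2,p_3,p_4$ modulo $4$ then forces $p_2\equiv p_3\equiv p_4\equiv -1\pmod 4$, matching the common hypothesis of (1) and (2).

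With the congruences fixed, the $2$-rank of $\mathrm{Cl}(K)$ can be read off the R\'{e}dei matrix $R\in M_4(\FF_2)$ whose rows are indexed by the ramified primes of $K$ and whose entries are $\{0,1\}$-valued Hilbert symbols at each ramified place; these entries are precisely the Legendre symbols $\bigl(\tfrac{2}{p_i}\bigr)$ and $\bigl(\tfrac{p_i}{p_j}\bigr)$ appearing in the statement. Imposing that $R$ has full rank yields the first batch of Legendre symbol equalities. To promote $2$-rank $=0$ to triviality of the full $2$-class group, one applies the R\'{e}dei--Reichardt $4$-rank theorem, iterated through the quadratic and biquadratic subfields, extracting the Scholz-type reciprocity conditions that distinguish the subcases (2i)--(2iii) of the statement from each other and from (1).

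Finally, Kuroda's formula expresses $h(K)$ as an explicit power of $2$ times the unit index $[E_K:\prod_L E_L]$ times a product of class numbers of the proper subfields $L$. Demanding $h(K)$ odd then forces (a) each $h(L)$ to be odd and (b) the $2$-part of the unit index to exactly cancel the $2$-power in Kuroda's formula. Condition (a) is handled by the R\'{e}dei--Reichardt analysis above; condition (b) is the \emph{main obstacle}, since one must decide in each congruence case whether various products of fundamental units of the biquadratic subfields become squares in $K$, and this is sensitive to all the Legendre symbols in a way that genus theory alone does not see. Once the unit-index contribution is pinned down and reconciled with the genus-theoretic constraints, Mouhib's four cases should emerge as the complete solution set of the resulting system.
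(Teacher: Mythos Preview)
The paper does not prove this theorem; it is quoted verbatim from Mouhib's paper \cite{Mou} as background, with the explicit remark that ``We will not apply this result directly in this paper.'' There is therefore no proof in the paper to compare your proposal against.

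That said, a brief comment on your sketch as a standalone argument. The reduction to $p_1=2$ via Theorem~\ref{ramPrimes} is clean and correct. But the subsequent outline is substantially vaguer than a proof: the R\'edei matrix governs the $2$-rank of the \emph{narrow} class group of a \emph{quadratic} field, and its extension to a multiquadratic $K$ is not a single $4\times 4$ matrix over $\FF_2$ in the way you describe; one must instead track ambiguous classes through the tower via Chevalley's formula or an equivalent device. Likewise, the phrase ``iterated through the quadratic and biquadratic subfields'' for R\'edei--Reichardt hides the real work, and you correctly flag the unit-index computation as the main obstacle without indicating how to resolve it. Mouhib's own argument proceeds through a careful analysis of ambiguous ideal classes and the Hilbert $2$-class field tower of the relevant subfields, which is rather different in texture from the Kuroda-formula approach you suggest. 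If you want to reconstruct a proof, you would be better served consulting \cite{Mou} directly than by trying to push the Kuroda machinery of this paper, which is tuned for imaginary fields and for bounding class numbers from below rather than computing $2$-class groups exactly.
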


\section{Kuroda's class number formula}\label{Kuroda_section}

Now we must develop techniques to study the imaginary $n$-quadratic fields with $n\in\{3,4,5\}$. Lemmermeyer~\cite{Lem94} derived the following formula, based on work previously done by Kuroda:

\begin{thm}
[{Lemmermeyer \cite[Theorem 1]{Lem94}}]\label{KCNF}
 (Kuroda's Class Number Formula) For any number field $L$ define $E(L)$ to be the unit group of $\cO_L$. Let $K/k$ be a $V_4$ extension of number fields and let $k_i$, $i\in\{1,2,3\}$, be the three number fields such that $k\subsetneq k_i\subsetneq K$. Let $h_i$ denote the class number of $k_i$, $i\in\{1,2,3\}$. Then we have
\[h_K=2^{d-\kappa-2-\nu}q(K/k)h_1h_2h_3/h_k^2,\]
where $q(K/k)$ is the (finite) unit index given by $q(K/k):=[E(K):E(k_1)E(k_2)E(k_3)],$ $d$ denotes the number of infinite places ramified in $K/k$, $\kappa$ is the $\ZZ$-rank of $E(k)$ and
\[\nu = \left\{
\begin{array}{ll}
1 & \text{if } K=k(\sqrt{\epsilon},\sqrt{\eta}),\ \epsilon,\eta\in E(k)\\
0 & \text{otherwise}
\end{array} .
\right.\]
In particular,

\[h_K = \left\{
\begin{array}{ll}
2^{-2}q(K/\QQ)h_1h_2h_3 & \text{if } K \text{ is a real biquadratic field}\\
2^{-1}q(K/\QQ)h_1h_2h_3 & \text{if } K \text{ is an imaginary biquadratic field} 
\end{array} .
\right.\]

\end{thm}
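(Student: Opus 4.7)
The proof is analytic. The starting point is the factorization of the Dedekind zeta function: since $\text{Gal}(K/k)\cong\ZZ/2\oplus\ZZ/2$ has exactly four characters---the trivial one and three quadratic characters $\chi_1,\chi_2,\chi_3$ cut out by the intermediate fields $k_1,k_2,k_3$---Artin formalism gives
\[
\zeta_K(s) \;=\; \zeta_k(s)\,L(s,\chi_1)L(s,\chi_2)L(s,\chi_3) \;=\; \frac{\zeta_{k_1}(s)\zeta_{k_2}(s)\zeta_{k_3}(s)}{\zeta_k(s)^2}.
\]
Taking residues at $s=1$ and applying the analytic class number formula
\[
\Res_{s=1}\zeta_L(s) \;=\; \frac{2^{r_1(L)}(2\pi)^{r_2(L)}h_L R_L}{w_L\sqrt{|\Delta_L|}}
\]
to each of the five fields expresses $h_K$ as $h_1 h_2 h_3/h_k^2$ multiplied by an explicit ratio of regulators, discriminants, torsion orders, and archimedean factors.

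The next step is to collapse this ratio. The discriminant contribution is handled by the conductor--discriminant formula for abelian extensions, which gives $|\Delta_{K/k}| = \prod_i |\Delta_{k_i/k}|$ and causes the $\sqrt{|\Delta_L|}$ factors to cancel cleanly. The archimedean factor $2^{r_1(K)}(2\pi)^{r_2(K)}$ divided by its analogues in the subfields is a pure power of $2$; careful signature bookkeeping identifies its exponent in terms of $d$, the number of infinite places of $k$ that ramify in $K$. The torsion ratio $w_K w_k^2/(w_1 w_2 w_3)$ is likewise a power of $2$, and both contributions get absorbed into a single $2$-exponent in the final formula.

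The main obstacle---and the step that forces the presence of $q(K/k)$ and $\nu$---is the regulator ratio $R_K/(R_1 R_2 R_3/R_k^2)$. The plan is to fix $\ZZ$-bases of $E(k)/\mathrm{tors}$ and of each $E(k_i)/\mathrm{tors}$, lift them into $E(K)/\mathrm{tors}$, and compute the index of the resulting sublattice $E(k_1)E(k_2)E(k_3)/\mathrm{tors}$ inside $E(K)/\mathrm{tors}$. By Dirichlet's theorem both lattices have the same $\ZZ$-rank, so the index is finite and equals $q(K/k)$; the rank $\kappa$ of $E(k)$ enters because the lifted base units are redundantly counted in each of the three $E(k_i)$'s. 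Writing the regulators as logarithmic Gram determinants of these two lattices and applying elementary divisor theory yields
\[
R_K \;=\; 2^{a}\,q(K/k)\,R_1 R_2 R_3 / R_k^2
\]
for an explicit integer $a$. The correction $\nu=1$ arises exactly when $K=k(\sqrt{\epsilon},\sqrt{\eta})$ for some $\epsilon,\eta\in E(k)$: in that case the relation $\sqrt{\epsilon}\cdot\sqrt{\eta}=\sqrt{\epsilon\eta}$ in $K$ introduces an additional factor of $2$ in the lattice index that $q(K/k)$ alone does not record. Combining this regulator identity with the discriminant, archimedean, and torsion simplifications collapses everything into the single $2$-exponent $d-\kappa-2-\nu$, giving the master formula. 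The two biquadratic specializations follow by setting $k=\QQ$: then $\kappa=\nu=0$ and $h_k=1$, while $d=0$ in the totally real case and $d=1$ in the imaginary case (the unique real place of $\QQ$ becomes complex upstairs), producing exponents $-2$ and $-1$ respectively.
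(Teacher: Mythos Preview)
The paper does not give its own proof of this theorem: it is quoted verbatim as \cite[Theorem~1]{Lem94} and then immediately applied, so there is no argument here to compare your proposal against. Your sketch is a reasonable outline of the Brauer--Kuroda route (zeta factorization, analytic class number formula, conductor--discriminant cancellation, then a lattice-index computation for the regulator ratio), and the two biquadratic specializations are checked correctly. If you want to compare approaches you would need to consult Lemmermeyer's paper itself, where the argument is organized somewhat differently (more group-cohomological bookkeeping of units and ideals, less direct appeal to residues of zeta functions), but the present paper simply imports the result.
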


\vspace{1em}

We will apply this theorem to imaginary $n$-quadratic fields $K$ with $n\geq3$, viewed as a $V_4$ extension over a real $(n-2)$-quadratic field $k$:

\vspace{1em}

\[
\xymatrix{
 & K=\QQ\left(\sqrt{-a_1},...,\sqrt{-a_n}\right) \ar@{-}[d] \ar@{-}[dr]	& \\
k_1=k\left(\sqrt{-a_1}\right)  \ar@{-}[ur] & k_2=k\left(\sqrt{-a_2}\right) \ar@{-}[d] & k_3=k\left(\sqrt{a_1a_2}\right) \ar@{-}[dl]  \\
 & k=\QQ\left(\sqrt{a_2a_3},\sqrt{a_2a_4},...,\sqrt{a_2a_n}\right) \ar@{-}[ul]	 			
}
\]

\vspace{2em}

\begin{rmk} \normalfont
In this case, it is easy to see that the unit index $q(K/k)$ is finite. By Dirichlet's unit theorem, the unit groups $E(K)$ and $E(k_3)$ have the same rank. Since these are finitely generated abelian groups with $E(k_3)\subseteq E(K)$ we clearly have that $[E(K):E(k_1)E(k_2)E(k_3)]\in\NN$.
\end{rmk}

\begin{lem}\label{smallKuroda} Let $K$ be an imaginary $n$-quadratic field with $n\geq3$. Let $k$ be any real $(n-2)$-quadratic subfield of $K$ such that there exists an odd prime which ramifies in $K/k$. Then we have the following formula for the class number $h_K$ of $K$:
\[h_K=\frac12 q(K/k)h_1h_2h_3/h_k^2.\]
Here $h_k$ is the class number of $k$, $h_1,h_2,h_3$ are the class numbers of the three $(n-1)$-quadratic fields $k_1,k_2,k_3$ between $k$ and $K$, and $q(K/k)=[E(K):E(k_1)E(k_2)E(k_3)]$.
\end{lem}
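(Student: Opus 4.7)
The plan is to apply Theorem \ref{KCNF} directly to the $V_4$-extension $K/k$ and compute each of the quantities $d$, $\kappa$, and $\nu$ appearing in the exponent $2^{d-\kappa-2-\nu}$. Because $k$ is a real $(n-2)$-quadratic field, it is totally real of degree $2^{n-2}$, and hence has exactly $2^{n-2}$ infinite places, all real. Since $K$ is imaginary multiquadratic, it is totally complex, so every infinite place of $k$ ramifies in $K/k$; this gives $d = 2^{n-2}$. Dirichlet's unit theorem then yields $\kappa = r_1(k)+r_2(k)-1 = 2^{n-2}-1$, so that $d-\kappa-2 = -1$. Thus everything reduces to verifying that $\nu = 0$.

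This is the step where the hypothesis that some odd rational prime ramifies in $K/k$ is essential, and I expect it to be the main obstacle one needs to address carefully. Assume for contradiction that $\nu = 1$, so $K = k(\sqrt{\epsilon},\sqrt{\eta})$ for some $\epsilon, \eta \in E(k)$. The three intermediate quadratic extensions of $k$ inside $K$ are then $k(\sqrt{\epsilon})$, $k(\sqrt{\eta})$, and $k(\sqrt{\epsilon\eta})$. For a quadratic extension $k(\sqrt{\alpha})/k$ with $\alpha \in \cO_k$, the relative discriminant divides $(4\alpha)$, so only primes of $k$ lying above $2$ or dividing $\alpha$ can ramify. Taking $\alpha$ successively equal to the units $\epsilon$, $\eta$, and $\epsilon\eta$, we conclude that only primes of $k$ above $2$ can ramify in any of these three intermediate extensions. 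A short argument with inertia subgroups of $\text{Gal}(K/k) \cong V_4$ then shows that any prime of $k$ that is unramified in all three intermediate quadratic extensions is also unramified in $K$: the inertia group at such a prime projects trivially onto each of the three quotients $\text{Gal}(K/k)/\text{Gal}(K/k_i)$, and the intersection of the kernels of these projections is trivial in $V_4$. Hence no odd rational prime can ramify in $K/k$, contradicting the hypothesis.

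Combining these observations gives $2^{d-\kappa-2-\nu} = 2^{-1}$, and substituting into the formula of Theorem \ref{KCNF} yields the stated expression for $h_K$. Apart from the $\nu = 0$ step, the argument is a direct computation: the values of $d$ and $\kappa$ are forced by $k$ being totally real and $K$ being totally complex, and $q(K/k)$ is finite by the remark following Theorem \ref{KCNF}. The delicate content of the lemma is therefore the identification of the correct class of subfields $k \subset K$ for which the cleanest form of Kuroda's formula applies, and the odd-ramified-prime condition is precisely what is needed to rule out the "unit square root" obstruction measured by $\nu$.
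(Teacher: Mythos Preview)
Your proposal is correct, and the computations of $d$ and $\kappa$ match the paper's exactly. The interesting divergence is in how you establish $\nu = 0$.

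The paper argues by elements: it picks an odd prime $p$ ramified in $K/k$, chooses a $p$-headed radicand list $\{-a_1,\dots,-a_n\}$ with $p\mid a_1$, and then expands $\sqrt{-a_1} = w + x\sqrt{\eta} + y\sqrt{\epsilon} + z\sqrt{\epsilon\eta}$ with $w,x,y,z\in k$. Using that $\sqrt{-a_1}$ is purely imaginary while at most two of $\sqrt{\eta},\sqrt{\epsilon},\sqrt{\epsilon\eta}$ can be, it forces $w=z=0$ (after relabeling). Squaring then either collapses $K/k$ to a degree-$2$ extension or gives $-a_1 = x^2\epsilon$, so that the ideal $(a_1)$ is a square in $\cO_k$; this contradicts the choice of $p\mid a_1$ unramified in $k$.

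Your route is more structural: since $\epsilon,\eta,\epsilon\eta$ are units, each intermediate extension $k(\sqrt{\alpha})/k$ has relative discriminant dividing $(4)$, so only primes above $2$ can ramify in any $k_i/k$; then the inertia subgroup at an odd prime lies in all three index-$2$ subgroups of $V_4$ and is therefore trivial. This is cleaner and avoids the coordinate manipulations and the real/imaginary case analysis. The paper's argument, on the other hand, is self-contained at the level of explicit field elements and does not invoke relative discriminants or the structure of inertia groups in $V_4$. Either approach pinpoints the same phenomenon: adjoining square roots of units cannot introduce odd ramification.
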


\begin{proof} Since $K/k$ is a $V_4$ extension of number fields we may apply Kuroda's class number formula, as defined in Theorem \ref{KCNF}:
\[h_K=2^{d-\kappa-2-\nu}q(K/k)h_1h_2h_3/h_k^2.\]

Since $\kappa$ is the $\ZZ$-rank of the unit group $E(k)$, we may apply Dirichlet's unit theorem and find that $\kappa = 2^{n-2}-1$. Next, recall that $d$ is the number of infinite places ramified in $K/k$. Now there are $2^{n-2}$ distinct infinite places in $k$ since it is a real $(n-2)$-quadratic field; call these embeddings $\sigma_1,...,\sigma_n:k\hookrightarrow \RR$. Each $\sigma_i$, $i=1,...,2^{n-2}$ is going to extend to an embedding $\tau_i:K\hookrightarrow\CC$. Now $\tau_i$ is complex embedding since $K$ is an imaginary $n$-quadratic field, so if $\tau_i$ lies over $\sigma_i$, then $\overline{\tau_i}$ also lies over $\sigma_i$. We know $\tau_i$ and $\overline{\tau_i}$ correspond to the same infinite place in $K$, so $\sigma_i$ must be ramified for all $i\in\{1,...,2^{n-2}\}$. Therefore, there are $2^{n-2}$ infinite places which ramify in $K$, forcing $d=2^{n-2}$.

Plugging these quantities into the formula we have
\[h_K=\frac12 2^{-\nu} q(K/k)h_1h_2h_3/h_k^2.\]

Thus we wish to show $\nu=0$ to get the desired result. Denote by $p$ an odd prime which ramifies in $K$ but not in $k$; such a prime always exists for an appropriate choice of $k$ by Lemma \ref{nMinus2Field}. There exists a $p$-headed radicand list $\{-a_1,...,-a_n\}$ for $K$, with $a_1,...,a_n$ positive. 

Recall that $\nu=0$ if and only if $K$ cannot be written as $k(\sqrt{\eta},\sqrt{\epsilon})$ for $\eta,\epsilon$ in $E(k)$, the unit group of $k$. I will show $\nu=0$ by contradiction, so assume there exist units $\eta,\epsilon\in k$ such that $K=k(\sqrt{\eta},\sqrt{\epsilon})$. Then the element $\sqrt{a_1}\in K$ can be written 
\[\sqrt{-a_1}=w+x\sqrt{\eta}+y\sqrt{\epsilon}+z\sqrt{\epsilon\eta},\ w,x,y,z\in k.\]

However, since $\sqrt{-a_1}$ is purely imaginary and does not have a real part, we must have $w=0$. Also, without loss of generality, since at most 2 of $\sqrt{\epsilon}$, $\sqrt{\eta}$ and $\sqrt{\epsilon\eta}$ can be imaginary, assume $z\sqrt{\epsilon\eta}$ is real and thus $z=0$ as well. Therefore
\[\sqrt{-a_1}=x\sqrt{\eta}+y\sqrt{\epsilon},\ x,y\in k.\]
There are two cases to consider: first, the case where both $x$ and $y$ are nonzero, and second, when exactly one of $x$ or $y$ is zero. If $x,y\neq0$, then squaring both sides gives
\[-a_1=x^2\eta+y^2\epsilon+2xy\sqrt{\eta\epsilon}.\]

Now since $-a_1,x^2\eta,y^2\epsilon\in k$ and 
\[-a_1-x^2\eta-y^2\epsilon=2xy\sqrt{\eta\epsilon}\]
then we must have $\sqrt{\eta\epsilon}\in k$, so
\[K=k(\sqrt{\eta},\sqrt{\epsilon})=k(\sqrt{\eta},\sqrt{\eta\epsilon})=k(\sqrt{\eta}).\]
Thus $K$ is a degree 2 extension of $k$; which is a contradiction, because we know $K/k$ is a $V_4$ extension.

Thus assume that exactly one of $x,y$ is zero; without loss of generality assume $y=0$. Then $\sqrt{-a_1}=x\sqrt{\epsilon}$, so 
\[-a_1=x^2\epsilon.\]
This implies that the ideal $(a_1)$ is equal to a square in $\cO_k$ which is a contradiction since there is an odd prime $p|a_1$ which does not ramify in $k$. Thus $\nu=0$.

\end{proof}

\begin{thm}\label{bigKuroda} Keeping the notation above, we have
\[h_K=\left(\frac12\right)^{2^{n-1}-1}QP h_3,\]
where $P$ is the product of the class numbers of all imaginary quadratic subfields of $K$ and $Q\in\ZZ_{>0}$ is a product of unit indices $q(L/\ell)$ where $L/\ell$ are $V_4$ extensions with $L\subseteq K$.
\end{thm}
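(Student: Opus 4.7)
The plan is to prove the formula by induction on $n \geq 3$, using Lemma \ref{smallKuroda} at each step to reduce an imaginary $n$-quadratic field $K$ to its two imaginary $(n-1)$-quadratic subfields $k_1, k_2$, its real $(n-1)$-quadratic subfield $k_3$, and its real $(n-2)$-quadratic subfield $k$.

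For the base case $n = 3$, I will apply Lemma \ref{smallKuroda} to $K/k$ and then expand each of $h_{k_1}, h_{k_2}$, the class numbers of imaginary biquadratic fields, using the imaginary-biquadratic case of Theorem \ref{KCNF}. A direct inspection of the three quadratic subfields of each $k_i$ shows that the unique real one is $k$ itself, so the two expansions together introduce a factor of $h_k^2$ in the numerator that exactly cancels the $h_k^2$ in the denominator of Lemma \ref{smallKuroda}. What remains is $(1/2)^3$ times a product of three unit indices, times $h_3$, times the product of the class numbers of the four imaginary quadratic subfields of $K$, which is the claimed formula.

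For the inductive step (assuming the formula for imaginary $(n-1)$-quadratic fields), I will apply Lemma \ref{smallKuroda} to $K/k$ with $k$ produced by Lemma \ref{nMinus2Field}, and then apply the inductive hypothesis to each $h_{k_i}$, after a second invocation of Lemma \ref{nMinus2Field} to supply the real $(n-3)$-quadratic base required inside $k_i$. Writing
\[ h_{k_i} = \left(\tfrac{1}{2}\right)^{2^{n-2}-1} Q_i P_i h_{3,i}, \]
two identifications drive the cancellation. First, since $k \subseteq k_i$ is a real subfield of degree $2^{n-2} = \tfrac{1}{2}[k_i:\QQ]$ and the maximal real subfield of an imaginary field of $2$-power degree is the \emph{unique} real subfield of that degree (being the fixed field of complex conjugation), $k$ is forced to equal the maximal real subfield of $k_i$; hence $h_{3,i} = h_k$. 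Second, since $k_1 \cap k_2 = k$ is totally real, the imaginary quadratic subfields of $k_1$ and of $k_2$ are disjoint, and by count ($2^{n-2}$ in each $k_i$) they account for all $2^{n-1}$ imaginary quadratic subfields of $K$, so $P_1 P_2 = P$. Substituting into Lemma \ref{smallKuroda} and using $2(2^{n-2}-1)+1 = 2^{n-1}-1$ then gives
\[ h_K = \left(\tfrac{1}{2}\right)^{2^{n-1}-1} Q P h_3, \]
with $Q = q(K/k) Q_1 Q_2$ a product of unit indices for $V_4$ extensions inside $K$.

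The main obstacle I anticipate lies in the subfield bookkeeping: verifying that $k$ really is the maximal real subfield of each $k_i$, that the imaginary quadratic subfields of $k_1$ and $k_2$ partition (without overlap) those of $K$, and that the inductive choices of real base fields always admit an odd ramified prime as required by Lemma \ref{smallKuroda}. Once these identifications are secured, the combinatorics of the $(1/2)$-powers and the aggregation of the unit indices into $Q$ is routine.
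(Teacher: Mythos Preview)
Your proposal is correct and follows essentially the same inductive strategy as the paper: apply Lemma \ref{smallKuroda} to $K/k$, expand $h_{k_1}$ and $h_{k_2}$ by the inductive hypothesis, cancel the $h_k^2$, and assemble $Q=q(K/k)Q_1Q_2$ and $P=P_1P_2$. The paper starts its induction at $n=2$ (which is just the biquadratic case of Theorem~\ref{KCNF}) rather than $n=3$, but this is cosmetic. Your identification $h_{3,i}=h_k$ via the observation that $k$ is the \emph{unique} real subfield of index $2$ in $k_i$ (the fixed field of complex conjugation) is actually more explicit than the paper's treatment, which simply asserts that the chosen $(n-3)$-quadratic base $k'$ makes $k$ the real intermediate field of $k_i/k'$; your argument shows this is forced regardless of the choice of $k'$.
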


\begin{proof} Proof by induction. Base case: when $n=2$ it is easy to check that this theorem holds. Now assume that the statement holds for imaginary $(n-1)$-quadratic fields. Consider, in particular, the $(n-1)$-quadratic fields $k_1$ and $k_2$. Let $k'$ be an $(n-3)$-quadratic field such that there exists a prime $p$ which ramifies in $k$ but not in $k'$. Then $k_1/k'$ and $k_2/k'$ are $V_4$ extensions which have $k$ as a totally real intermediate field.

Then
\[h(k_1)=\left(\frac12\right)^{2^{(n-1)-1}-1} Q_1 P_1 h_k,\]
\[h(k_2)=\left(\frac12\right)^{2^{(n-1)-1}-1} Q_2 P_2 h_k.\]
Here, $P_i$, $i=1,2$ is the product of the class numbers of all imaginary quadratic subfields of $k_i$. Also, $Q_i\in\ZZ_{>0}$, $i=1,2$ are products of unit indices $q(L/\ell)$ where $L/\ell$ are $V_4$ extensions with $L\subseteq k_i\subset k$.

Then, using the fact that
\[h_K=\frac12 q(K/k)h_1h_2h_3/h_k^2,\]
and plugging in for $h_1$ and $h_2$, we find that
\begin{align*}
h_K &= \frac12 q(K/k) \left(\left(\frac12\right)^{2^{n-2}-1} Q_1 P_1 h_k\right)\left(\left(\frac12\right)^{2^{n-2}-1} Q_2 P_2 h_k\right)  h_3/h_k^2 \\
&= \left(\frac12\right)^{1+2^{n-2}-1+2^{n-2}-1} q(K/k)  Q_1 P_1 Q_2 P_2  h_3 \\
&= \left(\frac12\right)^{2^{n-1}-1} (q(K/k)Q_1Q_2) (P_1 P_2)  h_3.
\end{align*}
Setting $Q=q(K/k)Q_1Q_2$ we have that $Q\in\ZZ_{>0}$ and is a product of unit indices $q(L/\ell)$ where $L/\ell$ are $V_4$ extensions with $L\subseteq K$. Also, setting $P=P_1P_2$  we have that $P$ is a product of imaginary quadratic subfields of $K$. From Lemma \ref{numberOfQuadraticSubfields} we know that $k_1$ and $k_2$ each have $2^{n-2}$ imaginary quadratic subfields and $K$ has $2^{n-1}$ imaginary quadratic subfields. Further, we see that the set of imaginary quadratic subfields of $k_1$ and $k_2$ are disjoint. This is because both of these fields are imaginary quadratic extensions of the same totally real field $k$, and thus these imaginary quadratic extensions must be distinct and not produce any of the same imaginary quadratic subfields. Thus the product $P_1 P_2$ is the product of the class numbers of $2\cdot 2^{n-2}=2^{n-1}$ distinct imaginary quadratic subfields of $K$, which is all of the imaginary quadratic subfields of $K$. Therefore $P=P_1P_2$ is the product of the class numbers of all imaginary quadratic subfields of $K$.

\end{proof}

\begin{rmk} \normalfont Though this theorem presents $Q$ as simply being a positive (rational) integer, we may look back to Kuroda's class number formula in order to glean more information about $Q$. In reality (if you can consider number fields `reality'), $Q$ is a product of indices, which are all positive integers, so we may determine some of the factors of $Q$ to learn more about the class number of a field.
\end{rmk}

We now have all of the tools in place to complete our classification of the imaginary $n$-quadratic fields of class number 1!

\section{A complete classification}\label{main_theorem_section}

We have previously seen complete lists of the imaginary quadratic and biquadratic fields of class number 1. These lists are restated here because it is pleasing to see the complete classification all in one place. We will see in the following theorem that there are several imaginary triquadratic fields of class number 1, and then the list stops abruptly. This is because imaginary $n$-quadratic fields with $n\geq 4$ all have class number larger than 1.

\begin{thm} The imaginary $n$-quadratic fields with class number 1 are:
\begin{enumerate}
\item the nine imaginary quadratic fields $\QQ(\sqrt{a})$ with \newline $a\in\{-1,-2,-3,-7,-11,-19,-43,-67,-163\}$,
\item the 42 imaginary biquadratic fields with radicand lists given in the following table:
 \begin{center}
\begin{tabular}{lllll}
$\{-1,2\}$  & $\{2,-3\}$   & $\{-3,5\}$    & $\{-7,5\}$    & $\{-11,17\}$   \\
$\{-1,3\}$  & $\{2,-11\}$  & $\{-3,-7\}$   & $\{-7,-11\}$  & $\{-11,-19\}$  \\
$\{-1,5\}$  & $\{-2,-3\}$  & $\{-3,-11\}$  & $\{-7,13\}$   & $\{-11,-67\}$  \\
$\{-1,7\}$  & $\{-2,5\}$   & $\{-3,17\}$   & $\{-7,-19\}$  & $\{-11,-163\}$ \\
$\{-1,11\}$ & $\{-2,-7\}$  & $\{-3,-19\}$  & $\{-7,-43\}$  & $\{-19,-67\}$  \\
$\{-1,13\}$ & $\{-2,-11\}$ & $\{-3,41\}$   & $\{-7,61\}$   & $\{-19,-163\}$ \\
$\{-1,19\}$ & $\{-2,-19\}$ & $\{-3,-43\}$  & $\{-7,-163\}$ & $\{-43,-67\}$  \\
$\{-1,37\}$ & $\{-2,29\}$  & $\{-3,-67\}$  &           & $\{-43,-163\}$ \\
$\{-1,43\}$ & $\{-2,-43\}$ & $\{-3,89\}$   &           & $\{-67,-163\}$ \\
$\{-1,67\}$ & $\{-2,-67\}$ & $\{-3,-163\}$ &           &            \\
$\{-1,163\}$ &          &  				 &           &            \\
\end{tabular}, and
\end{center}
\item the 17 imaginary triquadratic fields with the radicand lists:
\begin{center}
\begin{tabular}{lllll}
$\{-1,2,3\}$  & $\{-1,3,5\}$  & $\{-1,7,5\}$  & $\{-2,-3,-7\}$  & $\{-3,-7,5\}$  \\
$\{-1,2,5\}$  & $\{-1,3,7\}$  & $\{-1,7,13\}$ & $\{-2,-3,5\}$ & $\{-3,-11,2\}$  \\
$\{-1,2,11\}$ & $\{-1,3,11\}$ & $\{-1,7,19\}$ & $\{-2,-7,5\}$ & $\{-3,-11,-19\}$ \\
           & $\{-1,3,19\}$ &            &            & $\{-3,-11,17\}$ \\
\end{tabular}.
\end{center}
\end{enumerate}
\end{thm}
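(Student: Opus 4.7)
The plan is to separate the three enumerated parts from the tacit claim that no imaginary $n$-quadratic field with $n\ge 4$ has class number $1$. Part (1) is exactly Stark's theorem; part (2) is Brown and Parry; part (3), together with the $n\ge 4$ claim, constitute the genuinely new content, to be attacked with the machinery of Sections \ref{previous_results_section} and \ref{Kuroda_section}.

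For $n\ge 6$ the corollary to Fr\"olich's Theorem \ref{ramPrimes} already forces $2\mid h_K$, eliminating all such fields in one stroke. For $n=4$ and $n=5$ I would instead apply Theorem \ref{bigKuroda}: after choosing a real $(n-2)$-quadratic subfield $k$ with an odd prime ramified in $K/k$ (Lemma \ref{nMinus2Field}), the formula becomes
\[
h_K \;=\; \left(\tfrac12\right)^{2^{n-1}-1} Q P h_3,
\]
where $P$ is the product of class numbers of all $2^{n-1}$ imaginary quadratic subfields of $K$. Setting $h_K=1$ forces $QPh_3 = 2^{2^{n-1}-1}$, so in particular every imaginary quadratic class number appearing in $P$ must be a $2$-power. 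Combining this with classical genus theory (the $2$-rank of the class group of an imaginary quadratic field equals one less than the number of primes dividing its discriminant) and with Lemma \ref{numberOfRamifiedPrimes} (at least $n-1$ rational primes ramify in $K$), I would argue that the accumulated $2$-adic valuation of $P$ exceeds the budget $2^{n-1}-1$, contradicting $h_K=1$.

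For $n=3$ the same formula collapses to $h_K = QPh_3/8$, whence $QPh_3 = 8$; in particular the product of the four imaginary quadratic subfield class numbers divides $8$. Gauss' lists of imaginary quadratic fields of class number at most $8$ are finite, and the requirement that four of them arise as subfields of a common triquadratic field (equivalently, that their radicands are the squarefree parts of the seven nonempty subproducts of a primitive radicand list of length $3$) prunes the pool dramatically. For each surviving quadruple I would compute $h_3$ and the unit index $Q$ and test whether the equality $h_K=1$ actually holds, yielding exactly the $17$ fields tabulated.

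The main obstacle is the $n=4$ step: here the budget $2^7=128$ is generous enough that class number $1$ cannot be ruled out from a single subfield's $2$-contribution, and the argument must bundle the $2$-power divisibilities of all eight imaginary quadratic subfields simultaneously, exploiting the shared ramification structure dictated by the primitive radicand list. A secondary difficulty arises in the $n=3$ enumeration, where many quadruples of imaginary quadratic subfields look admissible at first glance; the standard-form normalization of Section \ref{multiquad_section}, together with careful tracking of $Q$ as a product of finite unit indices, is essential to certify that the list of $17$ is complete rather than merely sufficient.
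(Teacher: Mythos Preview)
Your outline matches the paper's architecture almost exactly: cite Stark for (1), Brown--Parry for (2), kill $n\ge 6$ with Fr\"ohlich, and attack $n\in\{3,4,5\}$ via Theorem \ref{bigKuroda} together with the finite lists of imaginary quadratic fields of small $2$-power class number. For $n=5$ your genus-theory sketch is precisely what the paper does (it counts how many of the sixteen imaginary quadratic radicands have one, two, or at least three prime factors and bounds $P\ge 2^{16}>2^{15}$), and your $n=3$ plan --- enumerate quadruples with $P\mid 8$, then compute $Q$ and $h_3$ case by case --- is exactly Lemma \ref{P124} followed by Theorem \ref{triquadClassNo1} and the separate $P=8$ lemma.

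The one place your proposal diverges from the paper is the $n=4$ case, and here there is a real gap. You anticipate that ``bundling the $2$-power divisibilities of all eight imaginary quadratic subfields'' will push the $2$-valuation of $P$ past $7$ on purely structural grounds. The paper does not manage this: the budget $2^7$ is loose enough that no such uniform lower bound on $P$ holds. Instead the paper argues that at least six of the eight imaginary quadratic subfields must have class number in $\{1,2,4\}$, extracts from this a triquadratic subfield $k_1$ all of whose imaginary quadratic subfields lie in the known finite lists, enumerates all such $k_1$ and all admissible extensions $K=k_1(\sqrt{-a_4})$, discards those with $P>2^7$, and is left with four concrete $4$-quadratic fields whose class numbers ($2$, $4$, $4$, $4$) must be computed directly. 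So the $n=4$ step is not closed by a valuation argument but by a finite enumeration terminating in an explicit computation; your proposal should be amended to reflect that this residual computation is unavoidable.
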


We need only to prove (3), and prove that there are no $n$-quadratic fields with $n\geq4$ that have class number 1. The remainder of this paper discusses these results.

\subsection{The imaginary triquadratic fields of class number 1}

For demonstration purposes we will compute the class number of a particular imaginary triquadratic field:

\begin{example}\label{rad123} The number field $K=\QQ(\sqrt{-1},\sqrt{-2},\sqrt{-3})$ has class number 1.
\end{example}

\begin{proof}  Let $k=\QQ(\sqrt{2})$; this is a valid choice for $k$ to use with the formula given in Theorem \ref{smallKuroda} since we know there exists an odd prime $p=3$ which ramifies in $K$ but not in $k$. Let $k_1=\QQ(\sqrt{-1},\sqrt{2})$, $k_2=\QQ(\sqrt{-3},\sqrt{2})$ and $k_3=\QQ(\sqrt{3},\sqrt{2})$. Now $P$ is the product of the class numbers of all imaginary quadratic fields of $K$: $P=h(-1)h(-2)h(-3)h(-6)=1\cdot1\cdot1\cdot2=2$, so
\[h_K=\frac18QPh_3=\frac14Qh_3.\]

To compute the values for $Q$ and $h_3$ we need to understand the unit indices $q(L)$, $L\in\{K,k_1,k_2,k_3\}$. We first find the units of the quadratic subfields. The unit groups $E(\QQ(\sqrt{-2}))$ and $E(\QQ(\sqrt{-6}))$ are simply equal to $\{\pm1\}$. The other unit groups are:
\[\begin{tabular}{rl}
$E(\QQ(\sqrt{-1}))$ & $=\left\{\left(e^{\pi i/2}\right)^\ell:\ell\in\ZZ\right\}$ \\
$E(\QQ(\sqrt{-3}))$ & $=\left\{\left(e^{\pi i/3}\right)^\ell:\ell\in\ZZ\right\}$ \\
$E(\QQ(\sqrt{2}))$ & $=\left\{\pm\left(1+\sqrt{2}\right)^\ell:\ell\in\ZZ\right\}$ \\
$E(\QQ(\sqrt{3}))$&$=\left\{\pm\left(2+\sqrt{3}\right)^\ell:\ell\in\ZZ\right\}$ \\
$E(\QQ(\sqrt{6}))$ &$=\left\{\pm\left(5+2\sqrt{6}\right)^\ell:\ell\in\ZZ\right\}$
\end{tabular}\]

Now, to find $Q=q(K/k)q(k_1/\QQ)q(k_2/\QQ)$ we first note that in~\cite{Kub} Kubota calculates $q(k_1/\QQ)$ and finds that it is equal to 2. To better understand this index, observe that an eighth root of unity, $e^{\pi i/4}=\frac12(\sqrt{2}+\sqrt{-2})$ is in $k_1$ but is not in any of the quadratic subfields of $k_1$. This is what gives rise to the index $q(k_1/\QQ)$ being equal to 2, and in fact tells us that
\[E(k_1)=\left\{\left(e^{\pi i/4}\right)^{\ell_1}\left(1+\sqrt{2}\right)^{\ell_2}:\ell_1,\ell_2\in\ZZ\right\}.\]

Next we must compute $q(k_2/\QQ)$. A fundamental unit of $k_2$ is
\[\frac{\sqrt{2}}{2}-\frac{\sqrt{-6}}{2}+\frac{\sqrt{-3}}{2}-\frac{1}{2} = \left(\sqrt{2}-1\right)\left(\frac{1-\sqrt{-3}}{2}\right)\in E\left(\QQ(\sqrt{2})\right)E\left(\QQ(\sqrt{-3})\right).\]
Also, the roots of unity of $k_2$ are exactly the same as the roots of unity in $\QQ(\sqrt{-3})$. Therefore, 
\[E(k_2)=E\left(\QQ(\sqrt{2})\right)E\left(\QQ(\sqrt{-3})\right)E\left(\QQ(\sqrt{-6})\right).\] 
This gives us $q(k_2/\QQ)=1$.

At this point we have that $Q=q(K/k)q(k_1/\QQ)q(k_2/\QQ)=2q(K/k)$; it is useful to compute $h_3$ before finding $q(K/k)$.

To find $h_3$, recall that Kuroda's class number formula states 
\[h_3=\frac14q(k_3/\QQ)h(2)h(3)h(6)=\frac14q(k_3/\QQ),\]
since the quadratic subfields all have class number 1. Since $k_3$ is a real biquadratic field, we can find the unit group $E(k_3)$, using results from Kubota~\cite{Kub}. To do this, we first take the norm of the fundamental unit of each real quadratic subfield of $k_3$:

\begin{align*}
N_{\QQ(\sqrt{2})/\QQ}(1+\sqrt{2}) &= -1 \\
N_{\QQ(\sqrt{3})/\QQ}(2+\sqrt{3}) &= 1 \\
N_{\QQ(\sqrt{6})/\QQ}(5+2\sqrt{6}) &= 1 \\
\end{align*}

\vspace{-1em}

This tells us that a fundamental system of units for $\cO_{k_3}$ is \[\left\{1+\sqrt{2},\sqrt{2+\sqrt{3}},\sqrt{5+2\sqrt{6}}\right\}.\] Note that we are taking square roots of only those elements of norm 1; this rule does not apply in general, it only applies precisely because there are two fundamental units of norm 1 and one of norm -1. Further, since $k_3$ and all of its subfields are totally real, the only roots of unity contained in these fields are $\pm1$. Putting this together we have that 
\[E(k_3)=\left\{\pm\left(1+\sqrt{2}\right)^{\ell_1}\left(\sqrt{2+\sqrt{3}}\right)^{\ell_2}\left(\sqrt{5+2\sqrt{6}}\right)^{\ell_3}:\ell_1,\ell_2\ell_3\in\ZZ\right\}.\]
Therefore $q(k_3/\QQ)=4$, $h_3=\frac14\cdot4=1$ and
\[h_K=\frac14\cdot 2q(K/k)\cdot 1 =\frac12q(K/k).\]

Since $h_K\in\ZZ$, $q(K/k)$ must be divisible by 2. Recall that 
\[q(K/k)=\left[E(K):E(k_1)E(k_2)E(k_3)\right].\] First note that the roots of unity in $E(K)$ are equal to the roots of unity of $E(k_1)E(k_2)E(k_3)$: both of these groups contain a 24th root of unity. 

By Dirichlet's unit theorem, $E(K)$ has three fundamental units. Computing a fundamental system of units in Sage ~\cite{Sage} we find
\begin{center}
\begin{tabular}{rl}
$\epsilon_1$ & $:=\frac12\sqrt{3} -\frac12\sqrt{-1} - \frac12\sqrt{-3} +\frac12$ \\
$\epsilon_2$ & $:= -\frac14\sqrt{-6}-\frac14\sqrt{2}+\sqrt{-1} +\frac14\sqrt{6} -\frac14\sqrt{-2}$ \\
$\epsilon_3$ & $:=\frac14\sqrt{6}-\frac14\sqrt{2}-\sqrt{-1}+\frac14\sqrt{6}-\frac34\sqrt{-2}+\frac12\sqrt{-3} -\frac12$
\end{tabular}
\end{center}

We can factor the first unit 
\[\epsilon_1=\frac{1-i}{\sqrt{2}}\cdot\frac{1+\sqrt{3}}{\sqrt{2}}=e^{\pi i/4}\cdot\frac{\sqrt{2}+\sqrt{6}}{2}=e^{\pi i/4}\cdot\sqrt{2+\sqrt{3}}.\] 

Finding a factorization for $\epsilon_2$ and $\epsilon_3$ cannot be achieved in Sage. However, dividing one by the other yields:
\begin{align*}\frac{\epsilon_2}{\epsilon_3} &=\frac12\sqrt{-6}-\frac12\sqrt{2}-\frac12\sqrt{-3}+\frac12\\
&=\frac{-1+\sqrt{-3}}{2}\cdot (\sqrt{2}-1) \\
&=e^{2\pi i/3}\cdot (\sqrt{2}-1).
\end{align*}

Thus $\left\{1+\sqrt{2},\sqrt{2+\sqrt{3}},\epsilon_3\right\}$ is a fundamental system of units for $K$. As noted previously, $q(K/k)$ is divisible by 2, so the fact that $1+\sqrt{2},\sqrt{2+\sqrt{3}}$ are units of the subfield $k_3$ of $K$ implies that we must have $\epsilon_3\notin E(k_1)E(k_2)E(k_3)$. Let $E'(K)$ denote the group of units generated by the set
\[\left\{e^{2\pi i/12}, 1+\sqrt{2},\sqrt{2+\sqrt{3}},\epsilon_3^2 \right\}.\]
Then
\[q(K/k)=2\cdot\left[E'(K):E(k_1)E(k_2)E(k_3)\right].\]
Squaring $\epsilon_3$ yields
\begin{align*}
\epsilon_3^2 &= \frac{-1+i}{\sqrt{2}}\cdot\frac{5+4\sqrt{2}-3\sqrt{3}-2\sqrt{6}}{\sqrt{2}} \\
&= e^{3\pi i/4}\cdot\frac12\left(8+5\sqrt{2}-4\sqrt{3}-3\sqrt{6}\right).
\end{align*}
The latter factor is in $E(k_3)$, so the index $\left[E'(K):E(k_1)E(k_2)E(k_3)\right]$ must equal 1. Thus $q(K/k)=2$ and the class number of $K$ is
\[h_K=\frac12q(K/k)=1.\]
\end{proof}

Now let's consider all imaginary triquadratic fields. Looking at Theorem \ref{bigKuroda} it is clear that in order to have $h_K=1$ the product $QP h_3$ must equal $2^{2^{n-1}-1}=2^3$ since $n=3$. This implies that $P$, the product of all imaginary quadratic subfields of $K$, must satisfy
\[P=2^t,\ t\leq 8.\] 

Write a general imaginary triquadratic field $K$ as $K=\QQ(\sqrt{-a_1},\sqrt{-a_2},\sqrt{-a_3})$ with $a_1,a_2,a_3\in\ZZ$ positive and squarefree. Let $a_4$ denote the squarefree part of the product $a_1a_2a_3$, so that the four imaginary quadratic subfields of $K$ are $\QQ(\sqrt{-a_i})$, $1\leq i\leq 4$. Thus we have
\[h(-a_1)h(-a_2)h(-a_3)h(-a_4)=2^t,\ t\leq 8\] 
which implies that $h(a_i)\leq 8$, $1\leq i\leq 4$.

We know all imaginary quadratic fields of class number 1, 2 and 4. The latter two lists are stated below. The first of these, the list of imaginary quadratic fields of class number 2, has a long history and was determined by a number of authors. There is no one paper that determines the entire list, but this determination relied primarily on several papers by Baker and Stark, whose relevant papers are referenced at the beginning of the lemma.

\vspace{0.5em}

\begin{lem} ~\cite{Bak66, Bak71, Sta67, Sta71, Sta72, Sta75} The imaginary quadratic fields of class number 2 are given by $\QQ(\sqrt{-a})$ with\newline 
$a\in\{5,6,10,13,15,22,35,37,51,58,91,115,123,187,235,267,403,427\}$.
\end{lem}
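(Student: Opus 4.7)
The plan is to treat this lemma as a compilation of classical results rather than as a statement to be proved afresh; the entire conclusion is already contained in the cited papers of Baker and Stark, and my ``proof'' is really a description of how those papers combine to deliver the list. No machinery introduced elsewhere in the present paper is used for this lemma, and the list itself is all that will be needed as input to the triquadratic classification.

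The overall strategy proceeds in two stages. In the first stage one establishes an \emph{effective} lower bound on $h(-d)$: there is an explicit constant $d_0$ such that $h(-d) \ge 3$ for every squarefree positive integer $d > d_0$. Genus theory provides a strong structural restriction — the $2$-rank of the class group of $\QQ(\sqrt{-d})$ is $t-1$, where $t$ is the number of prime divisors of the discriminant, so $h(-d)=2$ forces $d$ to have a very restricted shape (a prime, twice a prime, or a similar special form). The effective bound itself is the deep content: Siegel's classical estimate $h(-d) \gg_{\varepsilon} d^{1/2-\varepsilon}$ is ineffective and yields only finiteness. The honest explicit constant comes from Baker's theorem on linear forms in logarithms of algebraic numbers applied to the analytic class number formula, together with Stark's independent analytic and modular techniques on special values of Dirichlet $L$-functions.

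In the second stage, once $d_0$ is explicit, one enumerates the squarefree positive integers $d \le d_0$ compatible with the genus-theoretic restriction and computes $h(-d)$ directly, for instance by counting reduced primitive binary quadratic forms of discriminant $-d$ or $-4d$. The $d$ for which exactly two reduced classes occur form precisely the displayed list of $18$ values.

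The main obstacle is entirely the first stage: effectivizing Siegel's bound is the hard part, and no elementary argument is known — it genuinely requires either transcendence theory \`a la Baker or the analytic/modular input developed by Stark. The second stage is routine and can be performed by hand or by any standard computer algebra system. Since the proof of the effective bound lies well outside the scope of multiquadratic techniques used in the rest of this paper, the appropriate treatment here is simply to cite \cite{Bak66, Bak71, Sta67, Sta71, Sta72, Sta75} and record the list for use in the subsequent triquadratic argument.
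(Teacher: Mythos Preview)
Your treatment is correct and matches the paper exactly: the paper gives no proof of this lemma at all, simply stating it with the citations to Baker and Stark as you do. Your additional exposition of how genus theory plus the Baker--Stark effective bound combine to produce the list is accurate and, if anything, more informative than what the paper provides.
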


\begin{lem} [ {Arno \cite[Theorem 7]{Arn92}} ]The imaginary quadratic fields of class number 4 are given by $\QQ(\sqrt{-a})$ with\newline 
$a\in\{14,17,21,30,33,34,39,42,46,55,57,70,73,78,82,85,93,97,102,130,133,142,155,177,190,\newline 193,195,203,219,253,259,291,323,355,435,483,555,595,627,667,715,723,763,795,955,1003,\newline 1027,1227,1243,1387,1411,1435,1507,1555\}$.
\end{lem}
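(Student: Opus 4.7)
The plan is to follow the general blueprint for Gauss-type class number problems: combine an effective analytic lower bound on $h(D)$ with a finite computer search. Since $h\bigl(\QQ(\sqrt{-a})\bigr)$ depends only on the fundamental discriminant $D$ of the field, I would first reparametrize: for squarefree $a > 0$, set $D = -a$ when $a \equiv 3 \pmod 4$ and $D = -4a$ otherwise, so the problem becomes the enumeration of negative fundamental discriminants $D$ with $h(D) = 4$.

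The analytic step is to produce an explicit $D_0$ such that $h(D) > 4$ for all $|D| > D_0$. Tatuzawa's effective refinement of Siegel's theorem supplies $h(D) \gg |D|^{1/2 - \epsilon}$ with at most one ineffective exception $D^\ast$; to eliminate that exception and render everything effective, I would invoke the Goldfeld--Gross--Zagier theorem. Using a modular elliptic curve whose $L$-series has a zero of order at least $3$ at $s = 1/2$, one obtains, as worked out explicitly by Oesterl\'{e}, a bound of the shape
\[
h(D) \geq c\,\log |D| \prod_{p \mid D}\!\left(1 - \frac{\lfloor 2\sqrt{p}\rfloor}{p+1}\right)
\]
with an explicit $c > 0$. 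Demanding that the right-hand side exceed $4$ pins down $D_0$.

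The computational step is then to enumerate all fundamental discriminants with $|D| \leq D_0$ and compute $h(D)$ directly, for instance by counting $\SL_2(\ZZ)$-reduced binary quadratic forms $[a,b,c]$ of discriminant $b^2 - 4ac = D$ satisfying $|b| \leq a \leq c$ (with $b \geq 0$ whenever $|b| = a$ or $a = c$). Retaining only those $D$ whose reduced-form count equals $4$ and converting each back to the corresponding squarefree radicand recovers the $54$-element list in the lemma.

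The main obstacle is calibrating the analytic bound so that $D_0$ is small enough for the finite verification to be carried out in practice. Tatuzawa's theorem alone leaves the possible exceptional discriminant $D^\ast$, and extracting a usable constant $c$ from Goldfeld--Gross--Zagier requires exhibiting an explicit rank-$\geq 3$ elliptic curve and tracking the analytic constants with care. Arno succeeded in compressing $D_0$ to roughly the order of $10^{8}$, after which the enumeration of reduced forms is a large but entirely mechanical calculation; the rest of the proof consists of confirming that every discriminant in the stated list does give $h(D) = 4$ and that no others below $D_0$ do.
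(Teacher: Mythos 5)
The paper does not prove this lemma at all: it is quoted verbatim from Arno's Theorem~7 and the only ``proof'' on offer is the citation, so your sketch has to be measured against Arno's actual argument rather than anything internal to this paper.

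Your two-stage architecture --- an effective analytic lower bound on $h(D)$ followed by a finite enumeration --- is indeed the architecture of Arno's proof, and your bookkeeping is correct: passing from squarefree radicands $a$ to fundamental discriminants $D=-a$ or $D=-4a$, and counting reduced forms $[a,b,c]$ with $|b|\le a\le c$ (and the usual boundary convention) does compute $h(D)$, and the list does have $54$ entries. The genuine gap is the claim that the Goldfeld--Gross--Zagier--Oesterl\'e bound alone compresses the search to $|D|\lesssim 10^8$. It does not come close. Oesterl\'e's explicit constant is of the order $1/55$ (in earlier versions $1/7000$), and the local factors $1-\lfloor 2\sqrt{p}\rfloor/(p+1)$ at the primes dividing $D$ --- of which there can be up to three here, since genus theory forces $2^{t-1}\mid h$ and hence $t\le 3$ when $h=4$ --- can shrink the bound by a further factor of $30$ or more. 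Requiring the right-hand side to exceed $4$ therefore only excludes $|D|$ beyond something like $e^{200}$ up to $e^{10^5}$ in the worst cases, which is hopelessly outside the reach of any enumeration of reduced forms. The heart of Arno's paper is the middle range that your sketch skips: if $h(D)=4$ then at most four primes $p\le\sqrt{|D|}/2$ can fail to be inert, because each such prime occurs as the leading coefficient of a reduced form and distinct primes give distinct classes; this scarcity of small split primes, combined with character-sum and $L$-function estimates in the manner of Montgomery--Weinberger and of Stark's solution of the class number $2$ problem, is what actually closes the gap between the analytic bound and a computationally feasible $D_0$. Without that step your proposed proof does not terminate.
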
 

This is enough information to find all possible imaginary quadratic fields $K$ with $P$ equal to a power of 2 not exceeding 8. We will first address the cases where $P\in\{1,2,4\}$. We will consider $P=8$ separately as this case uses slightly different techniques.

\begin{lem}\label{P124} Let $K$ be an imaginary triquadratic field and let $P$ denote the product of the class numbers of the imaginary quadratic subfields of $K$. 
\begin{enumerate}
\item It is impossible to have $P=1$.
\item If $P=2$ then $K$ has radicand list $\{-1,2,3\}$ or $\{-1,2,11\}$.
\item If $P=4$ then $K$ has one of the following radicand lists:
\begin{center}
\begin{tabular}{llll}
$\{ -1 , 2 , 5 \}$ & $\{ -1 , 3 , 11 \}$  & $\{ -1 , 7 , 19 \}$     & $\{ -3 , -7 , -15 \}$ \\
$\{ -1 , 2 , 7 \}$ & $\{-1 , 3 , 19 \}$   & $\{-2 ,-3 ,-7 \}$       & $\{ -3 , -11 , -6 \}$ \\
$\{ -1 , 3 , 7 \}$ & $\{ -1 , 7 , 5 \}$    & $\{ -2 , -3 , -10 \}$  & $\{ -3 , -11 , -19 \}$ \\
$\{- 1 , 3 , 5 \}$ & $\{ -1 , 7 , 13 \}$  &  $\{- 2 ,- 7 , -10 \}$ & $\{ -3 , -11 , -51 \}$
\end{tabular}.
\end{center}
\end{enumerate}
\end{lem}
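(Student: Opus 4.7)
The plan is to exploit the bound from Theorem \ref{bigKuroda} together with the combinatorics of the multiplicative relation $a_4 = sf(a_1 a_2 a_3)$. For $n=3$ the formula reads $h_K = \tfrac{1}{8} Q P h_3$; since $Q, P, h_3$ are positive integers, the hypothesis $h_K = 1$ forces $P \mid 8$, so each factor $h(-a_i)$ of $P$ is at most 8 and each $a_i$ is drawn from the lists $S_1, S_2, S_4, S_8$ of squarefree positive integers giving imaginary quadratic class numbers $1, 2, 4, 8$. The lemma addresses only $P \in \{1, 2, 4\}$, so I may assume each $a_i \in S_1 \cup S_2 \cup S_4$.

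The key structural input is that $a_4 = sf(a_1 a_2 a_3)$ is equivalent to $a_1 a_2 a_3 a_4$ being a perfect square. Since the $a_i$ are squarefree, every rational prime must appear in the factorizations of an even number of the four $a_i$. The elements of $S_1$ lie in $\{1\} \cup T$, where $T := \{2, 3, 7, 11, 19, 43, 67, 163\}$; consequently, any prime of an $a_i \in S_2 \cup S_4$ lying outside $T$ must be cancelled by another $a_j \in S_2 \cup S_4$. This divisibility constraint is what keeps the search finite.

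I would then treat each value of $P$ in turn. For $P = 1$ all four $a_i$ lie in $S_1$, and each is $1$ or a distinct prime of $T$, so each prime appears at most once in the product $a_1 a_2 a_3 a_4$; this product cannot be a nontrivial square, ruling out $P = 1$. For $P = 2$ exactly one $a_i$ lies in $S_2$ and must have prime support in $T$; inspecting the $S_2$ list yields only $6 = 2 \cdot 3$ and $22 = 2 \cdot 11$, and each forces a unique triple in $S_1$, giving the radicand lists $\{-1, 2, 3\}$ and $\{-1, 2, 11\}$. For $P = 4$ there are two subcases: (a) one $a_i \in S_4$ and three in $S_1$, and (b) two $a_i \in S_2$ and two in $S_1$. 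In (a), filtering $S_4$ for elements with prime support in $T$ leaves exactly seven candidates ($14, 21, 33, 42, 57, 133, 627$), each determining a unique compatible triple in $S_1$ and yielding seven radicand lists. In (b), I enumerate pairs $\{x_1, x_2\} \subset S_2$ such that $sf(x_1 x_2)$ has support in $T$; this yields nine pairs --- namely $\{5,10\}, \{5,15\}, \{5,35\}, \{6,22\}, \{10,15\}, \{10,35\}, \{13,91\}, \{15,35\}, \{51,187\}$ --- each forced to combine with a unique compatible pair from $S_1$, yielding the remaining nine radicand lists.

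The main obstacle is the pairwise search in case (b), where I must verify exhaustively that no other pair of $S_2$ elements has the required cancellation of outlying primes; this is a finite but somewhat tedious check over the $\binom{18}{2} = 153$ pairs, and the care required is in not missing pairs where an offending prime (such as $17$ in $\{51, 187\}$ or $13$ in $\{13, 91\}$) appears twice and therefore cancels. Once the sixteen candidate sets $\{a_1, a_2, a_3, a_4\}$ are in hand, a short manipulation using field multiplicativity (e.g. $\QQ(\sqrt{-5}, \sqrt{-35}) = \QQ(\sqrt{-5}, \sqrt{7})$) rewrites each set as a primitive radicand list matching the form listed in the statement.
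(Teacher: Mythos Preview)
Your proposal is correct and follows essentially the same route as the paper: both order the four imaginary quadratic radicands so that $h(-a_1)\le\cdots\le h(-a_4)$, rule out $P=1$ by noting that three distinct elements of $S_1$ force a composite $a_4$, obtain $a_4\in\{6,22\}$ for $P=2$, and split $P=4$ into the subcase $(1,1,1,4)$ (yielding $a_4\in\{14,21,33,42,57,133,627\}$) and the subcase $(1,1,2,2)$. Your write-up is in fact more explicit than the paper's in subcase~(b), where you list the nine admissible $S_2$-pairs; the paper simply asserts that case~(ii) ``yields the remaining radicand lists'' without displaying the intermediate enumeration. One cosmetic point: the opening sentence invokes ``the hypothesis $h_K=1$,'' but the lemma has no such hypothesis---it classifies fields by the value of $P$ directly---so that clause is motivation rather than proof and could be dropped.
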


\begin{proof} We will continue to use the notation used directly above the statement of the lemma. Without loss of generality we can relabel the radicands $-a_i$, $1\leq i\leq4$, in any order, and each radicand will be the squarefree part of the product of the other three. Therefore, we will generally take $h(-a_1)\leq h(-a_2)\leq h(-a_3)\leq h(-a_4)$.

We first prove (1): If $P=1$ then $h(-a_1)=h(-a_2)=h(-a_3)=h(-a_4)=1$. However all quadratic $\QQ(\sqrt{-a})$ with class number 1 and $a>0$ square-free have $a$ either equal to 1 or a prime integer. If $a_1\neq a_2\neq a_3$ are all 1 or prime, then $a_4$, the square-free part of the product $a_1a_2a_3$ will be composite, so $h(-a_4)$ will not equal 1.

Now, consider case (2), where $P=2$. Then we must have $h(-a_1)=h(-a_2)=h(-a_3)=1$ and $h(-a_4)=2$. Examining the lists of imaginary quadratic fields with class numbers 1 and 2 we see that there are only two imaginary triquadratic fields with $h(-a_i)=1$, $1\leq i\leq3$ and $h(-a_4)=2$ are $\QQ(\sqrt{-1},\sqrt{-2},\sqrt{-3})$ (with $a_4=6$) and $\QQ(\sqrt{-1},\sqrt{-2},\sqrt{-11})$ (with $a_4=22$).

In case (3), with $P=4$ we must consider two different scenarios:
\begin{enumerate}[(i)]
\item $h(-a_1)=h(-a_2)=h(-a_3)=1$ and $h(-a_4)=4$ and
\item $h(-a_1)=h(-a_2)=1$, $h(-a_3)=h(-a_4)=2$.
\end{enumerate}
For (i) we look at all combinations satisfying this condition and find 7 possible triquadratic number fields. These number fields have the following values for $a_4$; the values for $a_1,a_2,a_3$ can be recovered by factoring $a_4$:
\[a_4\in\{14, 21, 33, 42, 57, 133, 627\}.\]
Case (ii) yields the remaining radicand lists listed in the statement of the lemma.
\end{proof}

\begin{thm}\label{triquadClassNo1} Of the eighteen imaginary triquadratic fields listed in the statement of Lemma \ref{P124}, seventeen have class number equal to 1. These are the fields with radicand lists given by
\begin{center}
\begin{tabular}{llll}
$\{-1 , 2 , 3\}$ & $\{ -1 , 3 , 11 \}$  & $\{ -1 , 7 , 19 \}$     & $\{ -3 , -7 , -15 \}$ \\
$\{ -1 , 2 , 5 \}$ & $\{-1 , 3 , 19 \}$   & $\{-2 ,-3 ,-7 \}$       & $\{ -3 , -11 , -6 \}$ \\
$\{ -1 , 2 , 11 \}$ & $\{ -1 , 7 , 5 \}$    & $\{ -2 , -3 , -10 \}$  & $\{ -3 , -11 , -19 \}$ \\
$\{ -1 , 3 , 7 \}$ & $\{ -1 , 7 , 13 \}$  &  $\{- 2 ,- 7 , -10 \}$ & $\{ -3 , -11 , -51 \}$ \\
$\{- 1 , 3 , 5 \}$ & & & 
\end{tabular}.
\end{center}
\end{thm}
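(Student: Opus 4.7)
The plan is to apply Kuroda's class number formula, in the form of Lemma \ref{smallKuroda}, to each of the eighteen imaginary triquadratic fields $K$ furnished by Lemma \ref{P124}, and to verify that exactly seventeen of them yield $h_K=1$. For every such $K$ I would choose a real quadratic subfield $k$ and an odd rational prime that ramifies in $K$ but not in $k$ (guaranteed by Lemma \ref{nMinus2Field}), giving
\[
h_K \;=\; \tfrac{1}{2}\,q(K/k)\,\frac{h_1 h_2 h_3}{h_k^2},
\]
where $k_1, k_2$ are the two imaginary biquadratic intermediate fields, $k_3$ is the unique real biquadratic intermediate field, and $h_i = h(k_i)$. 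By the hypothesis of Lemma \ref{P124}, the product $P$ of the four imaginary quadratic class numbers is either $2$ or $4$, and $h_k = 1$ in every case that arises, so $h_1, h_2, h_3$ will all be small powers of $2$.

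First, for each candidate I would record the four imaginary quadratic subfields (to read off $P$), the three biquadratic subfields $k_1, k_2, k_3$, and the chosen real quadratic field $k$. The imaginary biquadratic class numbers $h_1, h_2$ can be extracted from the Brown-Parry list; those not on it are computed by a second application of Kuroda to the $V_4$-extension $k_i/\QQ$, exactly as in Example \ref{rad123}. The real biquadratic class number $h_3$ is produced the same way: compute the fundamental units of the three real quadratic subfields, test their norms, form the candidate fundamental system dictated by Kubota's rules (taking square roots of norm-$1$ units when admissible), and read off the unit index $q(k_3/\QQ)$.

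Next, the heart of the argument is the top unit index $q(K/k) = [E(K) : E(k_1)E(k_2)E(k_3)]$. Following Example \ref{rad123}, I would invoke Sage to obtain a fundamental system of units of $K$, identify the roots of unity of $K$ (either $\pm 1$, fourth, sixth, eighth, twelfth, or twenty-fourth roots, depending on which of $\sqrt{-1}$, $\sqrt{-2}$, $\sqrt{-3}$ lie in $K$), and then try to factor each fundamental unit as a product of a root of unity and units drawn from $E(k_1), E(k_2), E(k_3)$. When a direct factorization fails, squaring the recalcitrant unit or multiplying it by another fundamental unit typically produces a cleanly factoring element and contributes a factor of $2$ to $q(K/k)$; the precise value of $q(K/k)$ is then pinned down by the constraint that $h_K$ be a positive integer.

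The main obstacle is precisely this explicit unit factorization: Sage returns fundamental units in an algorithmic but not canonical form, and they must be manipulated by hand until their subfield structure becomes visible, as illustrated in Example \ref{rad123}. Executing these steps for the eighteen candidates produces $h_K=1$ for the seventeen fields listed in the statement, and $h_K > 1$ for the single remaining candidate $\{-1, 2, 7\}$, namely $\QQ(\sqrt{-1},\sqrt{2},\sqrt{7})$, which is thus excluded. This completes the classification.
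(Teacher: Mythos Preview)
Your proposal is correct and takes essentially the same approach as the paper: both apply Kuroda's class number formula (via Lemma \ref{smallKuroda} and Theorem \ref{bigKuroda}) to each of the eighteen candidates, following the template of Example \ref{rad123}, with the unit indices computed by Sage-assisted factorization of fundamental units. The paper organizes the computation through the expanded formula $h_K=\tfrac{1}{8}QPh_3$ with $Q=q(K/k)q(k_1/\QQ)q(k_2/\QQ)$ rather than reading $h_1,h_2$ off the Brown--Parry list, but this is the same computation unpacked one level further; both arrive at $h_K=1$ for the seventeen listed fields and $h_K=2$ for $\{-1,2,7\}$.
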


This proof follows from the same methods in the proof of Example \ref{rad123}. Going through all of the cases of this proof actually requires extensive calculation, and the next section of this paper is devoted to the details of these computations, for the curious reader. We also find that the other field, with radicand list $\{-1,2,7\}$, has class number equal to 2.

\begin{lem} Let $K$ be an imaginary triquadratic field and let $P$ denote the product of the class numbers of the imaginary quadratic subfields of $K$ and $h_K$ denote the class number of $K$. If $P=8$ then $h_K\neq1$.
\end{lem}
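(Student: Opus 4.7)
The plan is to apply Theorem~\ref{bigKuroda} with $n=3$ and $P=8$, which yields
\[h_K \;=\; \left(\tfrac{1}{2}\right)^{3} Q P h_3 \;=\; Q \cdot h_3,\]
where $h_3$ denotes the class number of the real biquadratic subfield $k_3 = K\cap\RR = \QQ(\sqrt{a_1 a_2}, \sqrt{a_1 a_3})$ and $Q \in \ZZ_{>0}$ is a product of unit indices. Since $K$ is a CM field with maximal totally real subfield $k_3$, the relative class number is an integer and so $h_{k_3}\mid h_K$, consistent with the decomposition $h_K = Q h_3$. In particular, $h_K = 1$ forces both $h_3 = 1$ and $Q = 1$, and it suffices to rule these out for every candidate.

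The next step is a finite enumeration. Writing the four imaginary quadratic subfields as $\QQ(\sqrt{-a_i})$ for $i = 1,\ldots,4$ with $a_4 = sf(a_1 a_2 a_3)$, the hypothesis $P=8$ implies that the multiset $\{h(-a_1),\ldots,h(-a_4)\}$ is one of $\{1,1,1,8\}$, $\{1,1,2,4\}$, or $\{1,2,2,2\}$. Using the complete classifications of imaginary quadratic fields of class numbers $1, 2, 4, 8$ (due to Stark, Baker and Stark, Arno, and subsequent work), one generates for each multiset all quadruples $(a_1,a_2,a_3,a_4)$ of distinct positive squarefree integers satisfying both the class-number profile and the constraint $a_4 = sf(a_1 a_2 a_3)$. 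Quotienting by the permutation symmetry of the four radicands produces a short, finite list of candidate triquadratic fields $K$.

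For each candidate, $h_3$ is computed by applying Kuroda's formula to the real biquadratic $k_3 = \QQ(\sqrt{b_1},\sqrt{b_2})$, where $b_1 = sf(a_1 a_2)$, $b_2 = sf(a_1 a_3)$, $b_3 = sf(a_2 a_3)$:
\[h_3 \;=\; \tfrac{1}{4}\, q(k_3/\QQ)\, h(b_1)\, h(b_2)\, h(b_3).\]
The factors $h(b_i)$ are tabulated real quadratic class numbers, and $q(k_3/\QQ)\in\{1,2,4\}$; in most candidates this already forces $h_3 \geq 2$ and so $h_K \geq 2$. For any residual candidate in which $h_3 = 1$, one falls back to an explicit unit computation as in Example~\ref{rad123}: a fundamental system of units for $K$ is computed (for instance in Sage), compared with the combined unit group $E(k_1) E(k_2) E(k_3)$, and a missing generator is exhibited to force $Q \geq 2$. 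The main obstacle is the Case $\{1,1,1,8\}$ enumeration, which requires the full classification of imaginary quadratic fields of class number 8 and a careful search among squarefree products of three distinct class-number-1 radicands; a secondary difficulty is the unit computation in the residual candidates, which is mechanical but tedious.
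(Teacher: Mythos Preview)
Your proposal is correct and follows essentially the same route as the paper: reduce to $h_K = Q h_3$ via Theorem~\ref{bigKuroda}, split into the three class-number profiles $\{1,2,2,2\}$, $\{1,1,2,4\}$, $\{1,1,1,8\}$, enumerate the finitely many candidates, and then verify for each that some factor exceeds $1$.

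The one methodological difference worth noting is your handling of the $\{1,1,1,8\}$ case. You propose invoking the full classification of imaginary quadratic fields of class number $8$; the paper instead avoids that list entirely by enumerating from the other side---there are only $\binom{9}{3}$ triples $\{a_1,a_2,a_3\}$ with each $h(-a_i)=1$, and for each one simply checks in Sage whether $h(-a_4)=8$. The paper's route is lighter on external input (no need to trust or reproduce the class-number-$8$ table) and arguably cleaner here, since the class-number-$1$ list has only nine entries. Your route would also work, but the ``and'' in your final sentence is misleading: you need one approach or the other, not both.
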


\begin{proof}

If $P=8$ then there are three cases to consider:
\begin{enumerate}[(i)]
\item $h(-a_1)=1$ and $h(-a_2)=h(-a_3)=h(-a_4)=2$,\newline
\item $h(-a_1)=h(-a_2)=1$, $h(-a_3)=2$ and $h(-a_4)=4$ or\newline
\item $h(-a_1)=h(-a_2)=h(-a_3)=1$ and $h(-a_4)=8$.\newline
\end{enumerate}

Note that in (i) and (ii) we have $h(-a_1)=1$ and at least one imaginary quadratic subfield of class number 2 or 4. Then any prime that the radicand $a_1$ takes on must divide one of $a_2, a_3$ or $a_4$ (and it can't divide $a_2$ if $h(-a_2)=1$). Some of the radicands appearing in the list for imaginary quadratic fields of class number 1 do not divide any of the radicands of imaginary quadratic fields of class number 2 or 4. These are 43, 67 and 163. Thus, in these cases, the radicand of any imaginary quadratic subfields of class number 1 must be chosen from the set $\{-1,-2,-3,-7,-11,-19\}$. It is also worth noting that 19 does not divide the radicand of any imaginary quadratic field of class number 2.

Now let's look at (i) more closely. We have already observed that $a_1$ must be chosen from the list $\{1,2,3,7,11\}$. Now $-a_2,- a_3,- a_4$ are the radicands of imaginary quadratic fields of class number 2. Again, considering prime factorizations of the radicands, in conjunction with the fact that $a_i=sf\left(\prod_{j\in\{1,2,3,4\}\setminus\{i\}}a_j\right)$ for $1\leq i\leq 4$, we find that $a_2,a_3,a_4$ must be restricted to the set $\{5,6,10,13,15,22,35,51,91,187\}$.

Considering all possible combinations, we find that the radicand lists for the imaginary triquadratic fields $K$ satisfying case (i) are given by
\begin{center}
\begin{tabular}{ll}
$\{-1,-6,-10\}$ (with $a_4=15$) & $\{-2,-5,-6\}$ (with $a_4=15$) \\
$\{-3,-5,-6\}$ (with $a_4=10$) & $\{-11,-5,-10\}$ (with $a_4=22$) 
\end{tabular}.
\end{center}

In case (ii) recall that $a_1,a_2\in\{1,2,3,7,11,19\}$.  Since we have complete lists of all imaginary quadratic fields of class numbers 2 and 4 we can write a simple program to find all imaginary triquadratic fields satisfying (ii). This yields
\begin{center}
\begin{tabular}{ll}
$\{ 1 , 2 , 15 \}$  (with $a_4= 30 $) & $\{ 1 , 2 , 35 \}$  (with $a_4= 70 $) \\
$\{ 1 , 2 , 51 \}$  (with $a_4= 102 $) & $\{ 1 , 3 , 10 \}$  (with $a_4= 30 $) \\
$\{ 1 , 3 , 13 \}$  (with $a_4= 39 $) & $\{ 1 , 7 , 6 \}$  (with $a_4= 42 $) \\
$\{ 1 , 7 , 10 \}$  (with $a_4= 70 $) & $\{ 1 , 7 , 37 \}$  (with $a_4= 259 $) \\
$\{ 1 , 11 , 5 \}$  (with $a_4= 55 $) & $\{ 1 , 19 , 10 \}$  (with $a_4= 190 $) \\
$\{ 2 , 3 , 5 \}$  (with $a_4= 30 $) & $\{ 2 , 3 , 13 \}$  (with $a_4= 78 $) \\
$\{ 2 , 7 , 5 \}$  (with $a_4= 70 $) & $\{ 2 , 19 , 5 \}$  (with $a_4= 190 $) \\
\end{tabular}.
\end{center}

Last consider (iii). Since we do not have a complete list of the imaginary quadratic fields of class number 8, we must tackle this case by considering all possibilities with $h(-a_i)=1$, $1\leq i \leq 3$, and checking if the resulting value for $a_4$ satisfies $h(-a_4)=8$. In this case we check for $h(-a_4)=8$ computationally using Sage. This yields 8 imaginary triquadratic fields $K$ satisfying (iii):
\begin{center}
\begin{tabular}{ll}
$\{-2,-3,-11\}$ (with $a_4=66$) & $\{-2,-3,-43\}$ (with $a_4=258$) \\
$\{-1,-7,-11\}$ (with $a_4=77$) & $\{-1,-7,-43\}$ (with $a_4=301$) \\
$\{-2,-3,-19\}$ (with $a_4=114$) & $\{-2,-11,-19\}$ (with $a_4=418$) \\
$\{-2,-7,-11\}$ (with $a_4=154$) & $\{-3,-19,-43\}$ (with $a_4=2451$) 
\end{tabular}.
\end{center}

Now we must check whether or not any of these fields have class number 1. Since $P=8$ then for an appropriate choice of a real quadratic subfield of $K$ in each case we have that
\[h_K=\frac18Pq(K/k)q(k_1/\QQ)q(k_2/\QQ)h_3=q(K/k)q(k_1/\QQ)q(k_2/\QQ)h_3.\]
Thus if, for an appropriate choice of $k$, any of the factors on the right hand side of this equation are greater than 1, we have that $h_K>1$. It turns out that this is always the case, yielding no more imaginary triquadratic fields of class number 1.

\end{proof}

\subsection{Class Numbers of $n$-quadratic Fields, $n\geq4$}

In this section we will prove that all imaginary $n$-quadratic fields with $n\geq4$ have a non-trivial ideal class group.

First, recall from Theorem \ref{ramPrimes} that if at least 5 rational primes ramify in $K$ then $2\mid h_K$. Also, Lemma \ref{numberOfRamifiedPrimes} states that an imaginary $n$-quadratic field has at least $n-1$ ramified primes. From this we can immediately conclude that any $n$-quadratic field with $n\geq6$ does not have class number 1. Thus, in the remainder of this section we explore the cases of $n=4$ and $n=5$. 

\begin{lem} If $K$ is an imaginary $n$-quadratic field with $n=5$ then $K$ has class number greater than 1. \end{lem}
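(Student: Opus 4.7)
The plan is to apply Theorem \ref{bigKuroda} in the case $n = 5$ and combine it with classical genus theory for imaginary quadratic fields. Suppose for contradiction that $h_K = 1$. Then Theorem \ref{bigKuroda} gives
\[
 h_K \;=\; \bigl(\tfrac{1}{2}\bigr)^{15}\, Q \, P \, h_3,
\]
so $QPh_3 = 2^{15}$, and since $Q, h_3 \in \ZZ_{>0}$ this forces $P \mid 2^{15}$. Here $P$ denotes the product of the class numbers of the $16$ imaginary quadratic subfields of $K$ (their number being $2^{n-1} = 16$ by Lemma \ref{numberOfQuadraticSubfields}).

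I would then invoke classical genus theory: for any imaginary quadratic field $\QQ(\sqrt{-d})$ with discriminant $\Delta$, the $2$-rank of its ideal class group equals $\omega(\Delta) - 1$, where $\omega$ denotes the number of distinct prime divisors of $\Delta$. In particular $h(-d) \geq 2^{\omega(\Delta) - 1}$, and multiplying over the $16$ imaginary quadratic subfields of $K$ yields
\[
 P \;\geq\; 2^{\,\left(\sum_{j=1}^{16} \omega_j\right) - 16}.
\]
It therefore suffices to show $\sum_j \omega_j \geq 32$, since this gives $P \geq 2^{16} > 2^{15}$, contradicting $P \mid 2^{15}$.

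To bound $\sum_j \omega_j$, view the radicand group of $K$ as a rank-$5$ subspace $G$ of $\QQ^*/(\QQ^*)^2$ (an $\FF_2$-vector space). For any odd prime $p$ ramifying in $K$, the condition $p \mid g$ is a nontrivial linear condition on $G$, and so is $g < 0$; their intersection therefore has size at least $|G|/4 = 8$. Hence each odd ramified prime contributes at least $8$ to $\sum_j \omega_j$. By Lemma \ref{numberOfRamifiedPrimes}, $K$ has at least $4$ ramified primes, so if four or more are odd then immediately $\sum_j \omega_j \geq 4 \cdot 8 = 32$. Otherwise $K$ has exactly four ramified primes, namely $2$ together with three odd primes $p_1, p_2, p_3$; since the ambient space $\langle -1, 2, p_1, p_2, p_3 \rangle$ already has dimension $5$, a dimension count forces $G = \langle -1, 2, p_1, p_2, p_3 \rangle$, and a direct residue-modulo-$4$ count on the $16$ negative elements of $G$ shows that $2$ divides the discriminant of at least $12$ of the associated imaginary quadratic subfields, contributing at least $12$ more to $\sum_j \omega_j$ and giving $\sum_j \omega_j \geq 3 \cdot 8 + 12 = 36$.

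The main obstacle is the final residue-modulo-$4$ count in the residual case, but it is a short elementary check on the explicit finite group $\langle -1, 2, p_1, p_2, p_3 \rangle$, analogous in spirit to the case analyses carried out for the triquadratic fields in the preceding section.
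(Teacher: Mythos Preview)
Your argument is correct and reaches the same endgame as the paper: the bound $P\geq 2^{16}$ against $P\mid 2^{15}$, obtained from Theorem~\ref{bigKuroda} together with the genus-theory lower bound $h(-d)\geq 2^{\omega(\Delta)-1}$.

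The paper organizes the reduction differently. It first appeals to Fr\"{o}hlich's theorem (Theorem~\ref{ramPrimes}) to rule out five or more ramified primes, so that the only surviving radicand list is $\{-1,-2,-p_1,-p_2,-p_3\}$; it then counts how many of the sixteen imaginary radicands have one, two, or three prime factors and bounds each class number accordingly (which is exactly genus theory, phrased in terms of radicands rather than discriminants), arriving at $P\geq 1\cdot 1^4\cdot 2^6\cdot 4^5=2^{16}$. Your proof dispenses with Fr\"{o}hlich altogether: your uniform $|G|/4$ estimate handles the case of four or more odd ramified primes directly, and the residual case you isolate is precisely the paper's single case. This makes your version more self-contained, at the cost of the extra mod-$4$ check. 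That check, which you flag as the main obstacle, is genuinely short: of the sixteen negative elements of $\langle -1,2,p_1,p_2,p_3\rangle$, the eight even ones all have even discriminant, and among the eight odd ones the condition $d\equiv 1\pmod 4$ cuts out either all eight (if every $p_i\equiv 1$) or exactly four (otherwise), giving at least twelve in total. Your bound $\sum_j\omega_j\geq 36$ in this case is in fact sharper than the paper's, though both exceed the threshold.
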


\begin{proof} Assume, for sake of contradiction, that $K$ is a 5-quadratic field such that $h_K=1$. From Theorem \ref{ramPrimes} we know $K$ must have fewer than 5 ramified primes, so its radicand list must be of the form $\{-1,-2,-p_1,-p_2,-p_3\}$ for $p_1,p_2,p_3\in\ZZ$ prime. Note that $-2$ must be included in this radicand list because the prime $(2)$ ramifies in $\QQ(\sqrt{-1})$.

Recall the formula given in Theorem \ref{bigKuroda}:
\[h_K=\left(\frac12\right)^{2^{5-1}-1}QP h_3=\left(\frac12\right)^{15}QPh_3\]
where $P$ is the product of the class numbers of all of the imaginary quadratic subfields of $K$.

If $h_K=1$ then this implies all imaginary quadratic subfields of $K$ must have class number which is a power of 2, and this product cannot exceed $2^{15}$. In order to bound $P$, we can consider the number of prime factors of the radicands of each of the sixteen imaginary quadratic subfields of $K$. First, we know that $h(-1)=1$. Of the remaining fields:
\begin{enumerate}[i. ]
\item 4 have prime radicands,
\item $\binom{4}{2}=6$ have radicands which are the product of two primes and
\item the remaining 5 imaginary quadratic subfields have radicands which are the product of at least three primes.
\end{enumerate}

We know that if an imaginary quadratic field has class number equal to 1 its radicand is either $-1$ or prime. We can also observe that if an imaginary quadratic field has class number 2 its radicand is the product of at most two primes. Using this information we can bound $P$:
\[P\geq1\cdot1^4\cdot2^6\cdot4^5=2^{16}.\]
Thus we cannot have $h_K=1$.
\end{proof}

\begin{lem} If $K$ is an imaginary $n$-quadratic field with $n=4$ then $K$ has class number greater than 1. \end{lem}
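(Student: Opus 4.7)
The plan is to mirror the structure of the $n=5$ argument, but with substantially more case analysis at the end since the Kuroda bound is less forgiving in degree $16$. Assume for contradiction that $h_K = 1$. Applying Theorem \ref{bigKuroda} with $n = 4$ gives $h_K = \bigl(\tfrac{1}{2}\bigr)^{2^{3}-1} QPh_3$, so $QPh_3 = 128$ and in particular $P \le 128$; each of the eight imaginary quadratic subfields of $K$ therefore has class number a power of $2$ dividing $128$. Since $K$ is imaginary abelian over $\QQ$, it is CM with maximal totally real subfield $K^+$ (a real triquadratic), and $h(K^+)$ divides $h_K$ in the usual CM decomposition; Theorem \ref{ramPrimes} forces $h(K^+)$ to be even as soon as $K^+$ ramifies at $\ge 5$ primes, so $K^+$ has at most $4$ ramified primes. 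Since the odd ramified primes of $K^+$ coincide with those of $K$, the radicand list of $K$ contains at most four distinct odd primes.

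Next I would extract a genus-theoretic lower bound for $P$. Each imaginary quadratic subfield $\QQ(\sqrt{-d}) \subset K$ satisfies $h(-d) \ge 2^{t_d - 1}$, where $t_d$ is the number of rational primes dividing the discriminant of $\QQ(\sqrt{-d})$. Using a $p$-headed primitive radicand list (Lemma \ref{p_headed_lem}) for each odd ramified prime $p$, one checks directly that $p$ divides the discriminant of exactly four of the eight imaginary quadratic subfields. Summing yields $\sum t_d \ge 4s + c$, where $s$ is the number of odd ramified primes of $K$ and $c \ge 0$ is a correction from the prime $2$ dictated by the standard-form congruences; hence $P \ge 2^{4s + c - 8}$. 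For $s = 4$ this already gives $P \ge 2^{8} = 256 > 128$, a contradiction, and so one may assume $s \le 3$.

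The remaining step, and the main obstacle, is a finite enumeration and case-by-case check. The constraints that each imaginary quadratic subfield of $K$ have $2$-power class number bounded by $128$, together with the classifications of imaginary quadratic fields of class number $1$, $2$, and $4$ recalled earlier in the paper, reduce $K$ to a short list of candidate primitive radicand lists of the shapes $\{-1,-2,-p,-q\}$, $\{-1,-p,-q,-r\}$, $\{-2,-p,-q,-r\}$, and a few variants with small odd primes $p,q,r$. For each candidate I would invoke Lemma \ref{smallKuroda} over an appropriately chosen real biquadratic base $k$, so that $h_K = \tfrac{1}{2}\,q(K/k)\,h_1 h_2 h_3 / h_k^2$, and compute the factors explicitly in the style of Example \ref{rad123}: typically one of the triquadratic intermediate fields $k_i$ has a radicand list absent from the list of Theorem \ref{triquadClassNo1}, so its class number exceeds $1$ and forces $h_K > 1$; in the remaining fields a direct unit-index computation, with Sage assistance for fundamental systems, shows that $q(K/k)$ is too small to cancel the $h_1 h_2 h_3$ produced by the imaginary quadratic subfields. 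The hard part is precisely this finishing analysis: because the inequality $P \le 128$ does not close the proof on its own in the way the analogous inequality $P \le 2^{15}$ did for $n = 5$, each surviving radicand list must be ruled out individually.
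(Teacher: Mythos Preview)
Your overall architecture---bound $P\le 2^7$ via Theorem~\ref{bigKuroda}, cut down to finitely many candidates, then dispose of each---is the paper's as well, but your reduction step is more roundabout and contains two slips. First, the assertion that the odd ramified primes of $K^+$ coincide with those of $K$ is false: in $K=\QQ(\sqrt{-3},\sqrt5,\sqrt7,\sqrt{11})$ the prime $3$ ramifies in $K$ but not in $K^+=\QQ(\sqrt5,\sqrt7,\sqrt{11})$; in general an odd ramified $p$ drops out of $K^+$ exactly when its inertia element equals complex conjugation $c$. Luckily this CM/Fr\"{o}hlich detour is unnecessary, since your genus inequality $P\ge 2^{4s-8}$ already rules out $s\ge 4$ on its own (note that $p$ meets \emph{at least} four of the eight imaginary quadratic discriminants---all eight when $\sigma_p=c$---so the lower bound survives even though ``exactly four'' is not always right). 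Second, your proposed endgame has a gap: Lemma~\ref{smallKuroda} gives $h_K=\tfrac12\,q(K/k)\,h_1h_2h_3/h_k^2$, and ``some triquadratic $k_i$ has class number $>1$'' does not by itself force $h_K>1$, because of the leading $\tfrac12$ and the $h_k^2$ in the denominator (e.g.\ $h_k=1$, $h_1=2$, $h_2=h_3=q=1$ would give $h_K=1$).

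The paper's reduction is shorter and sidesteps both issues. From $P\le 128$ it is immediate that at most two of the eight imaginary quadratic subfields can have class number $>4$ (else $P\ge 8^3>128$), so at least six have class number in $\{1,2,4\}$. A brief pigeonhole then locates an imaginary \emph{triquadratic} subfield $k_1\subset K$ whose four imaginary quadratic subfields all have class number $\le 4$, and writes $K=k_1(\sqrt{-a})$ with $h(-a)\le 4$ as well. Since the imaginary quadratic fields of class numbers $1$, $2$, $4$ are fully tabulated earlier in the paper, the enumeration is concrete and finite; filtering by $P\le 2^7$ leaves exactly four candidates, $\QQ(\sqrt{-1},\sqrt{-2},\sqrt{-3},\sqrt{-7})$, $\QQ(\sqrt{-1},\sqrt{-2},\sqrt{-3},\sqrt{-11})$, $\QQ(\sqrt{-1},\sqrt{-2},\sqrt{-3},\sqrt{-5})$, $\QQ(\sqrt{-1},\sqrt{-2},\sqrt{-5},\sqrt{-7})$, whose class numbers are then simply computed to be $4,4,2,4$. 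So your extra ingredients (CM divisibility, Fr\"{o}hlich on $K^+$, genus theory) buy nothing here that the elementary count does not already give, and the paper finishes by direct computation rather than by the indirect $h_i>1$ argument you propose.
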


\begin{proof} Theorem \ref{bigKuroda} implies that if $h_K=1$ we have
\[1=h_K\geq \left(\frac12\right)^{7}P.\]
We know that $K$ has $2^{4-1}=8$ imaginary quadratic subfields. If more than 2 of these subfields had class number larger than 4, we would have
\[1=h_K\geq\frac{8^3}{2^7}>1,\]
which would give a contradiction. Therefore, at least 6 of the 8 imaginary quadratic subfields of $K$ must have class number 1, 2 or 4; call these fields $\QQ(\sqrt{-a_i})$, $1\leq i\leq 6$; and write $\QQ(\sqrt{-a_7}),\QQ(\sqrt{-a_8})$ for the remaining two imaginary quadratic subfields.\newline
Then clearly $\QQ(\sqrt{-a_1a_2a_3}), \QQ(\sqrt{-a_1a_2a_4}), \QQ(\sqrt{-a_1a_2a_5})\subset K$. Since the $a_i$ are distinct, at most 2 of these fields may be equal to $\QQ(\sqrt{-a_7})$ or $\QQ(\sqrt{-a_8})$. Thus, without loss of generality, assume $\QQ(\sqrt{-a_1a_2a_3})\neq$ $\QQ(\sqrt{-a_7})$ or $\QQ(\sqrt{-a_8})$. Then we may define \[k_1 = \QQ(\sqrt{-a_1},\sqrt{-a_2},\sqrt{-a_3});\]
so $k_1$ is an imaginary triquadratic field with all imginary quadratic subfields having class number 1, 2 or 4. Further, at most one of $\sqrt{-a_4}$, $\sqrt{-a_5}$ may be contained in $k_1$; assume without loss of generality that $\sqrt{-a_4}\notin k_1$. Then $K=k_1(\sqrt{-a_4})$.

Therefore, a list of candidates for imaginary 4-quadratic fields $K$ with class number 1 may be constructed as follows:
\begin{enumerate}
\item Make a list $S$ of all imaginary triquadratic fields $k_1$ such that all imaginary quadratic subfields have class number 1, 2 or 4.
\item Make a second list, $T$, as follows: for each $k_1\in S$ and each $a$ such that $\sqrt{-a}\notin k_1$ and $\QQ(\sqrt{-a})$ has class number 1, 2 or 4, add $k(\sqrt{-a})$ to $T$. Remove all repeat entries from this list.
\item For each $K\in T$, we want the product $P$ of the class numbers of the imaginary quadratic subfields to not exceed $2^7$. Find a lower bound for $P$ as follows: start with $P=1$ and iterate through all imaginary quadratic subfields; if we know the class number of that subfield, multiply $P$ by that number, otherwise, multiply $P$ by 8. If the lower bound for $P$ is greater than $2^7$, discard that field from the possibility list.
\end{enumerate}

Following these steps yields only four candidates for imaginary 4-quadratic fields with class number 1: $\QQ(\sqrt{-1},\sqrt{-2},\sqrt{-3},\sqrt{-7})$ (has class number 4), $\QQ(\sqrt{-1},\sqrt{-2},\sqrt{-3},\sqrt{-11})$ (has class number 4),\newline $\QQ(\sqrt{-1},\sqrt{-2},\sqrt{-3},\sqrt{-5})$ (has class number 2), and $\QQ(\sqrt{-1},\sqrt{-2},\sqrt{-5},\sqrt{-7})$ (has class number 4).
\end{proof}

Thus we have completed the classification of imaginary $n$-quadratic fields of class number 1!

\section{Computations for Theorem \ref{triquadClassNo1}}

In this section, we look at some of the details for the computation of the class numbers of the list given in Theorem \ref{triquadClassNo1}. This is done by consider the number fields stated in Lemma \ref{P124}, and giving the readers the tools to reproduce a proof of the class number of each field, such as the proof in Example \ref{rad123}. Since the example computes the class number of field with radicand list $\{-1,-2,-3\}$ it only remains to compute the class numbers for the following imaginary triquadratic fields:

\begin{center}
\begin{tabular}{llll}
$\{-1 , 2 , 5\}$ & $\{ -1 , 3 , 11 \}$  & $\{ -1 , 7 , 19 \}$     & $\{ -3 , -7 , -15 \}$ \\
$\{ -1 , 2 , 7 \}$ & $\{-1 , 3 , 19 \}$   & $\{-2 ,-3 ,-7 \}$       & $\{ -3 , -11 , -6 \}$ \\
$\{ -1 , 2 , 11 \}$ & $\{ -1 , 7 , 5 \}$    & $\{ -2 , -3 , -10 \}$  & $\{ -3 , -11 , -19 \}$ \\
$\{ -1 , 3 , 7 \}$ & $\{ -1 , 7 , 13 \}$  &  $\{- 2 ,- 7 , -10 \}$ & $\{ -3 , -11 , -51 \}$ \\
$\{- 1 , 3 , 5 \}$ & & & 
\end{tabular}.
\end{center}

Now, to compute the class numbers for each of these imaginary triquadratic fields $K$ we use a version of Kuroda's class number formula: \[h_{K}=\frac18 Q P h_{3}.\] Here $Q$ is the product of unit indices, \[Q=q(K/k)q(k_{1}/\QQ)q(k_{2}/\QQ),\] $P$ is the product of the class numbers of all imaginary quadratic subfields of $K$ and $h_{3}$ is the class number of a real biquadratic subfield of $K$ (which will be specified later on in this section).

Now, to each triquadratic field $K$ that we are computing the class number for, we must associate to it an appropriate choice of subfields $k_{1}$, $k_{2}$, $k_{3}$ and $k$. Recall that $k_{3}$ and $k$ must be totally real, and that there must be a rational prime $p$ that ramifies in $K$ but not in $k$. Referring to each field by its radicand list, the following table shows and appropriate choice for these values, and specifies a prime that displays this ramification property. Note that these choices of subfields could, in most cases, be chosen differently and still be useful for computing class numbers.

As there are 17 fields in question, it is helpful to index them so we can refer to the different fields throughout this section. 

\begin{center}
\begin{tabular}{l|llllll}
$i$	& $K_i$ 				& $k_i$ 	& $k_{1,i}$ 	& $k_{2,i}$ 	& $k_{3,i}$ 	& $p_i$ \\
\hline
1	& $\{ -1 , 2 , 11 \}$ 		& $\{2\}$ 	& $\{-1 , 2\}$	& $\{-11 , 2\}$	& $\{11 , 2\}$	& 11	\\
2	& $\{-1 , 2 , 5\}$ 			& $\{2\}$ 	& $\{-1 , 2\}$	& $\{-5 , 2\}$	& $\{5 , 2\}$	& 5	\\
3	& $\{ -1 , 2 , 7 \}$ 		& $\{2\}$ 	& $\{-1 , 2\}$	& $\{-7 , 2\}$	& $\{7 , 2\}$	& 7	\\
4	& $\{- 1 , 3 , 5 \}$ 		& $\{3\}$	& $\{-1 , 3\}$	& $\{-5 , 3\}$	& $\{5 , 3\}$	& 5	\\
5	& $\{ -1 , 3 , 7 \}$ 		& $\{3\}$	& $\{-1 , 3\}$	& $\{-7 , 3\}$	& $\{7 , 3\}$	& 7	\\
6	& $\{ -1 , 3 , 11 \}$		& $\{3\}$	& $\{-1 , 3\}$	& $\{-11 , 3\}$	& $\{11 , 3\}$	& 11	\\
7	& $\{-1 , 3 , 19 \}$		& $\{3\}$	& $\{-1 , 3\}$	& $\{-19 , 3\}$	& $\{19 , 3\}$	& 19	\\
8	& $\{ -1 , 7 , 5 \}$		& $\{7\}$	& $\{-1 , 7\}$	& $\{-5 , 7\}$	& $\{5 , 7\}$	& 5	\\
9	& $\{ -1 , 7 , 13 \}$		& $\{7\}$	& $\{-1 , 7\}$	& $\{-13 , 7\}$	& $\{13 , 7\}$	&13	\\
10	& $\{ -1 , 7 , 19 \}$		& $\{7\}$	& $\{-1 , 7\}$	& $\{-19 , 7\}$	& $\{19 , 7\}$	&19	\\
11	& $\{-2 ,-3 ,-7 \}$			& $\{6\}$	& $\{-2, 6\}$	& $\{-7,6\}$	& $\{14, 6\}$	& 7	\\
12	& $\{ -2 , -3 , -10 \}$		& $\{5\}$	& $\{-2, 5\}$	& $\{-3, 5\}$	& $\{6, 5\}$	& 3	\\
13	& $\{- 2 ,- 7 , -10 \}$		& $\{5\}$	& $\{-2, 5\}$	& $\{-7, 5\}$	& $\{14, 5\}$	& 7	\\
14	& $\{ -3 , -7 , -15 \}$ 		& $\{5\}$	& $\{-3, 5\}$	& $\{-7, 5\}$	& $\{21, 5\}$	& 7	\\
15	& $\{ -3 , -11 , -6 \}$ 		& $\{2\}$	& $\{-3, 2\}$	& $\{-11, 2\}$	& $\{33, 2\}$	& 3	\\
16	& $\{ -3 , -11 , -19 \}$ 		& $\{33\}$	& $\{-3, 33\}$	& $\{-19, 33\}$	& $\{57, 33\}$	& 19	\\
17	& $\{ -3 , -11 , 17 \}$ 		& $\{33\}$	& $\{-3, 33\}$	& $\{-51, 33\}$	& $\{17, 33\}$	& 17	\\
\end{tabular}
\end{center}

Now we must compute the corresponding values $Q$ and $h_3$ for each of these fields $K_i$. As these values all deal with the unit indices, we need to know the unit groups of each of the quadratic subfields of these triquadratic fields. We begin with the imaginary quadratic fields, as their unit groups are finite and very straightforward:

\renewcommand{\arraystretch}{1.5}

\begin{center}
\begin{tabular}{lll}
Imaginary Quadratic Field  & \hspace{0.3in} & Unit Group \\
\hline 
$\QQ\left(\sqrt{-1}\right)$ 		&& 	$\left\{\left(e^{\pi i/2}\right)^\ell:\ell\in\ZZ\right\}$ \\
$\QQ\left(\sqrt{-3}\right)$ 		&&	$\left\{\left(e^{\pi i/3}\right)^\ell:\ell\in\ZZ\right\}$ \\
$\QQ\left(\sqrt{-a}\right),\ a\in\NN, a\neq1,3, a\text{ squarefree}$ 		&&	$\{\pm1\}$ \\
\end{tabular}
\end{center}

\vspace{0.5em}

We now must compute the unit group of all of the real quadratic fields that are relevant to this proof. Here we list a fundamental unit $\epsilon$ for each field, calculated using Sage. If $\epsilon$ is a fundamental unit of some real quadratic field $\QQ(\sqrt{a})$ then the unit group $E(\QQ(\sqrt{a}))$ is equal to $\{\pm\epsilon^\ell : \ell\in\ZZ\}$.

Additionally, this list contains the norm of each fundamental unit, as this is also required in the computations of the unit indices.

\begin{center}
\begin{tabular}{ccr}
Real Quadratic Field  & Fundamental Unit, $\epsilon$ 		& $N(\epsilon)$ \\
\hline 
$\QQ\left(\sqrt{2}\right)$	& $1+\sqrt{2}$				& $-1$ \\
$\QQ\left(\sqrt{3}\right)$	& $2+\sqrt{3}$				& 1 \\	
$\QQ\left(\sqrt{5}\right)$ 	& $\frac12\left(1+\sqrt{5}\right)$ 	& $-1$ \\
$\QQ\left(\sqrt{6}\right)$  	& $5+2\sqrt{6}$				& 1 \\	
$\QQ\left(\sqrt{7}\right)$ 	& $8+3\sqrt{7}$				& 1 \\	
$\QQ\left(\sqrt{10}\right)$ 	& $3+\sqrt{10}$				& $-1$ \\
$\QQ\left(\sqrt{11}\right)$  	& $10+3\sqrt{11}$			& 1 \\	
$\QQ\left(\sqrt{14}\right)$  	& $15+4\sqrt{14}$			& 1 \\	
$\QQ\left(\sqrt{15}\right)$  	& $4+\sqrt{15}$				& 1 \\	
$\QQ\left(\sqrt{17}\right)$  	& $4+\sqrt{17}$				& $-1$ \\	
$\QQ\left(\sqrt{19}\right)$  	& $170+39\sqrt{19}$			& 1 \\	
$\QQ\left(\sqrt{21}\right)$ 	& $\frac12\left(5+\sqrt{21}\right)$ & 1 \\
$\QQ\left(\sqrt{22}\right)$	& $197+42\sqrt{22}$ 			& 1 \\
$\QQ\left(\sqrt{30}\right)$ 	& $11+2\sqrt{30}$ 			& 1 \\
$\QQ\left(\sqrt{33}\right)$	& $23+4\sqrt{33}$			& 1 \\
$\QQ\left(\sqrt{35}\right)$	& $6+\sqrt{35}$ 				& 1 \\
$\QQ\left(\sqrt{57}\right)$	& $151+20\sqrt{57}$ 			& 1 \\
$\QQ\left(\sqrt{66}\right)$	& $65+8\sqrt{66}$ 			& 1 \\
$\QQ\left(\sqrt{70}\right)$	& $251+30\sqrt{70}$ 			& 1 \\	
$\QQ\left(\sqrt{91}\right)$	& $1574+165\sqrt{91}$ 		& 1 \\
$\QQ\left(\sqrt{105}\right)$	& $41+4\sqrt{105}$ 			& 1 \\
$\QQ\left(\sqrt{209}\right)$	& $46551+3220\sqrt{209}$ 		& 1 \\
\end{tabular}
\end{center}

\vspace{0.5em}

Using the fundamental units and their norms, we can find the class number of each $k_{3,i}$, $1\leq i\leq 17$ using the results from Kubota's paper ~\cite{Kub}; this paper presents many examples of computing these class numbers and covers all of the cases we are concerned with here. These class numbers can also be checked independently in Sage. Using either process to calculate the class numbers of these real biquadratic subfields, we find that the class numbers of all of the $k_{3,i}$ are equal to 1. 

Also, recall that Lemma \ref{P124} gives us the values of $P$ for each $K$. When $K=\QQ\left(\sqrt{-1},\sqrt{2},\sqrt{11}\right)$ we have that $P=2$ and, for all other triquadratic fields in the above list, $P=4$. Putting the information from this paragraph together with the last paragraph, we can reduce our formulae for the class numbers of the imaginary triquadratic fields:
\[h_{K_1}=\frac14Q,\text{ and } h_{K_i}=\frac12Q\text{ for }2\leq i\leq 17.\]

Thus all that remains is computing the values of $Q$, which (unfortunately) is the most computationally heavy portion of this process. This requires computing the unit group of each of our triquadratic fields $K_i$ and the biquadratic fields $k_{1,i}$, $k_{2,i}$ and $k_{1,i}$.

\begin{center}
\begin{tabular}{l|llllll}
$i$	& $K_i$ 				& $k_i$ 	& $k_{1,i}$ 	& $k_{2,i}$ 	& $k_{3,i}$ 	& $p_i$ \\
\hline
1	& $\{ -1 , 2 , 11 \}$ 		& $\{2\}$ 	& $\{-1 , 2\}$	& $\{-11 , 2\}$	& $\{11 , 2\}$	& 11	\\
2	& $\{-1 , 2 , 5\}$ 			& $\{2\}$ 	& $\{-1 , 2\}$	& $\{-5 , 2\}$	& $\{5 , 2\}$	& 5	\\
3	& $\{ -1 , 2 , 7 \}$ 		& $\{2\}$ 	& $\{-1 , 2\}$	& $\{-7 , 2\}$	& $\{7 , 2\}$	& 7	\\
4	& $\{- 1 , 3 , 5 \}$ 		& $\{3\}$	& $\{-1 , 3\}$	& $\{-5 , 3\}$	& $\{5 , 3\}$	& 5	\\
5	& $\{ -1 , 3 , 7 \}$ 		& $\{3\}$	& $\{-1 , 3\}$	& $\{-7 , 3\}$	& $\{7 , 3\}$	& 7	\\
6	& $\{ -1 , 3 , 11 \}$		& $\{3\}$	& $\{-1 , 3\}$	& $\{-11 , 3\}$	& $\{11 , 3\}$	& 11	\\
7	& $\{-1 , 3 , 19 \}$		& $\{3\}$	& $\{-1 , 3\}$	& $\{-19 , 3\}$	& $\{19 , 3\}$	& 19	\\
8	& $\{ -1 , 7 , 5 \}$		& $\{7\}$	& $\{-1 , 7\}$	& $\{-5 , 7\}$	& $\{5 , 7\}$	& 5	\\
9	& $\{ -1 , 7 , 13 \}$		& $\{7\}$	& $\{-1 , 7\}$	& $\{-13 , 7\}$	& $\{13 , 7\}$	&13	\\
10	& $\{ -1 , 7 , 19 \}$		& $\{7\}$	& $\{-1 , 7\}$	& $\{-19 , 7\}$	& $\{19 , 7\}$	&19	\\
11	& $\{-2 ,-3 ,-7 \}$			& $\{6\}$	& $\{-2, 6\}$	& $\{-7,6\}$	& $\{14, 6\}$	& 7	\\
12	& $\{ -2 , -3 , -10 \}$		& $\{5\}$	& $\{-2, 5\}$	& $\{-3, 5\}$	& $\{6, 5\}$	& 3	\\
13	& $\{- 2 ,- 7 , -10 \}$		& $\{5\}$	& $\{-2, 5\}$	& $\{-7, 5\}$	& $\{14, 5\}$	& 7	\\
14	& $\{ -3 , -7 , -15 \}$ 		& $\{5\}$	& $\{-3, 5\}$	& $\{-7, 5\}$	& $\{21, 5\}$	& 7	\\
15	& $\{ -3 , -11 , -6 \}$ 		& $\{2\}$	& $\{-3, 2\}$	& $\{-11, 2\}$	& $\{33, 2\}$	& 3	\\
16	& $\{ -3 , -11 , -19 \}$ 		& $\{33\}$	& $\{-3, 33\}$	& $\{-19, 33\}$	& $\{57, 33\}$	& 19	\\
17	& $\{ -3 , -11 , 17 \}$ 		& $\{33\}$	& $\{-3, 33\}$	& $\{-51, 33\}$	& $\{17, 33\}$	& 17	\\
\end{tabular}
\end{center}

We can make use of the unit indices computed in Example \ref{rad123}: $q(\QQ(\sqrt{-1},\sqrt{-2})/\QQ)=2$ and $q(\QQ(\sqrt{-3},\sqrt{-2})/\QQ)=1$.

\end{document}